\theoremstyle{plain}
\newtheorem{theorem}{Theorem}
\newtheorem{corollary}{Corollary}
\newtheorem{proposition}{Proposition}
\newtheorem{lemma}{Lemma}
\newtheorem{remark}{Remark}
\theoremstyle{definition}
\newtheorem{example}{Example}
\newtheorem{definition}{Definition}
\newcommand{\bbP}{\mathbb{P}}
\newcommand{\bbR}{\mathbb{R}}
\newcommand{\bbZ}{\mathbb{Z}}
\newcommand{\bbE}{\mathbb{E}}
\newcommand{\bbN}{\mathbb{N}}
\newcommand{\bV}{\mathbf{V}}
\newcommand{\bZ}{\mathbf{Z}}
\newcommand{\bv}{\mathbf{v}}
\newcommand{\br}{\mathbf{r}}
\newcommand{\bS}{\mathbf{S}}
\newcommand{\bone}{\mathbf{1}}
\newcommand{\convas}{\xrightarrow{\text{a.s.}}}
\newcommand{\cP}{\mathcal{P}}
\newcommand{\op}{\mathrm{op}}
\newcommand{\spec}{\mathrm{spec}}
\DeclareMathOperator{\Var}{Var}
\DeclareMathOperator{\Tr}{Tr}
\DeclareMathOperator{\Cov}{Cov}
\newcommand{\FK}{{F{\"u}redi--Koml{\'o}s}}
\newcommand{\wt}{\mathsf{wt}}
\newcommand{\supp}{\mathsf{supp}}
\let\tilde\widetilde
\title[Edge spectra of Gaussian matrices with correleated entries]{Edge spectra of Gaussian random symmetric matrices with correlated entries}
\author[D. Banerjee]{Debapratim Banerjee}
\address{
    Department of Mathematics \\
    Ashoka University\\
    Plot no 2, Rajiv Gandhi Education City, Sonipat 131029\\
    Haryana, India.
}
\email{debapratim.banerjee@ashoka.edu.in}
\author[S. S. Mukherjee]{Soumendu Sundar Mukherjee}
\address{
    Statistics and Mathematics Unit \\
    Indian Statistical Institute \\
    203 B.T. Road, Kolkata 700108 \\
    West Bengal, India.
}
\email{ssmukherjee@isical.ac.in}
\author[D. Pal]{Dipranjan Pal}
\address{
    Statistics and Mathematics Unit \\
    Indian Statistical Institute \\
    203 B.T. Road, Kolkata 700108 \\
    West Bengal, India.
}
\email{dipranjan\_r@isical.ac.in}
\begin{document}

\begin{abstract}
In this article, we study the largest eigenvalue of a Gaussian random symmetric matrix $X_n$, with zero-mean, unit variance entries satisfying the condition $\sup_{(i, j) \ne (i', j')}|\mathbb{E}[X_{ij} X_{i'j'}]| = O(n^{-(1 + \varepsilon)})$, where $\varepsilon > 0$. It follows from \cite{catalano2024random} that the empirical spectral distribution of $n^{-1/2} X_n$ converges weakly almost surely to the standard semi-circle law. Using a F\"{u}redi-Koml\'{o}s-type high moment analysis, we show that the largest eigenvalue $\lambda_1(n^{-1/2} X_n)$ of $n^{-1/2} X_n$ converges almost surely to $2$. This result is essentially optimal in the sense that one cannot take $\varepsilon = 0$ and still obtain an almost sure limit of $2$. A simple application of the remarkably general universality results of \cite{brailovskaya2024universality} shows the universality of this convergence in a broad class of random matrices arising as random linear combinations of deterministic matrices. We also derive Gaussian fluctuation results for the largest eigenvalue in the case where the entries have a common non-zero mean. Let $Y_n = X_n + \frac{\lambda}{\sqrt{n}}\mathbf{1} \mathbf{1}^\top$. When $\varepsilon \ge 1$ and $\lambda \gg n^{1/4}$, we show that
\[
    n^{1/2}\bigg(\lambda_1(n^{-1/2} Y_n) - \lambda - \frac{1}{\lambda}\bigg) \xrightarrow{d} \sqrt{2} Z,
\]
where $Z$ is a standard Gaussian. On the other hand, when $0 < \varepsilon < 1$, we have $\Var(\frac{1}{n}\sum_{i, j}X_{ij}) = O(n^{1 - \varepsilon})$. Assuming that $\Var(\frac{1}{n}\sum_{i, j} X_{ij}) = \sigma^2 n^{1 - \varepsilon} (1 + o(1))$, if $\lambda \gg n^{\varepsilon/4}$, then we have
\[
    n^{\varepsilon/2}\bigg(\lambda_1(n^{-1/2} Y_n) - \lambda - \frac{1}{\lambda}\bigg) \xrightarrow{d} \sigma Z.
\]
While the ranges of $\lambda$ in these fluctuation results are certainly not optimal, a striking aspect is that different scalings are required in the two regimes $0 < \varepsilon < 1$ and $\varepsilon \ge 1$.
\end{abstract}

\maketitle

\begin{center}
\textbf{Keywords.} Matrices with correlated entries; semi-circle law; edge rigidity
\end{center}

\thispagestyle{empty}

\section{Introduction}\label{sec:intro}

Traditionally random matrix theory has considered matrix models with independent entries. Spectacular progress has been made on these independent models over the last two decades resulting in the resolution of the so-called Wigner-Dyson-Mehta conjecture \cite{mehta2004random,erdHos2010bulk, MR2661171, MR2851058, erdHos2012local,erdHos2012bulk, erdHos2012rigidity, ajanki2017universality}.

There has also been a steady stream of works on ensembles of random matrices where the entries are correlated. An incomplete list of works include \cite{boutet1996limiting, hachem2005empirical, schenker2005semicircle, pastur2011eigenvalue, chakrabarty2013limiting, gotze2015limit, hochstattler2016semicircle, chakrabarty2016random, ajanki2016local, che2017universality, erdHos2019random, alt2020correlated, goulart2022random, au2023spectral, catalano2024random, bonnin2024universality, mukherjee2024spectra, chakrabarty2024largest}.

In \cite{che2017universality} bulk universality was obtained under the assumption that the entries are $k$-dependent for some fixed $k$. A much more general model was considered in \cite{erdHos2019random}, where the authors imposed appropriate decay rates on multivariate cumulants (see Assumption (CD) in \cite{erdHos2019random}). Under these relaxed assumptions the authors proved bulk universality (see \cite[Corollary 2.6]{erdHos2019random}).

For edge rigidity and edge universality, one might look at \cite{alt2020correlated} and \cite{adhikari2019edge}. These works use the Green's function approach which is much successful in the independent setting. However, as pointed out in \cite{adhikari2019edge}, the Green's function approach becomes significantly more difficult when one has more and more correlations among the entries. One needs appropriate correlation decay hypotheses to execute this approach. In particular, for matrices $X$ with jointly Gaussian entries \cite{adhikari2019edge} assume the following correlation decay:
\begin{equation}\label{eq:corr-cond-ac19}
    |\Cov(X_{ij},X_{kl})| \le C \max\bigg\{\frac{1}{(|i-k|+|j-l|+1)^{d}},\frac{1}{(|i-l|+|j-k|+1)^{d}} \bigg\}
\end{equation}
for $d>2$ and some constant $C > 0$. Further, in \cite{alt2020correlated} the authors allow non-Gaussian entries and a similar power law correlation decay with exponent $d > 12$ (see Assumption (CD) in \cite{alt2020correlated}).

In this paper, we study the edge of the spectrum of correlated Gaussian matrices where the correlations decay like $O\left(\frac{1}{n^{1+\varepsilon}}\right)$ for a fixed $\varepsilon >0$. This setting neither implies nor is implied by \eqref{eq:corr-cond-ac19}. Indeed, one can have very high correlation among nearby entries as per \eqref{eq:corr-cond-ac19}; however, when the entries are far away, \eqref{eq:corr-cond-ac19} stipulates a much faster correlation decay. Thus when the entries are at distance $\Omega(n)$, \cite{adhikari2019edge}  assumes that the correlation-decay is of order $\frac{1}{n^{d}}$ for $d > 2$, which is much faster than our assumed decay rate of $n^{-(1 + \varepsilon)}$. As the authors of \cite{adhikari2019edge} point out, it is believed that $d > 2$ is the optimal regime where one might expect to prove universality estimates in these types of models.

It is well known that moment based techniques, despite their apparent crudeness, are remarkably robust. In fact, using the moment method, it was shown in the recent work \cite{catalano2024random} that when the correlations are all $\le \frac{1}{n}$, the empirical spectral distribution converges weakly almost surely to the standard semi-circle law (see Corollary 2.7 in \cite{catalano2024random}). This of course includes our setting where the correlations are uniformly $O(n^{-(1 + \varepsilon)})$. A natural question therefore is if the largest eigenvalue converges to $2$, the right end-point of the support of the standard semi-circle law. Employing the \emph{method of high moments} of \cite{furedi1981eigenvalues}, we show that this is indeed the case when $\varepsilon > 0$ (see Theorem~\ref{thm:largest_ev}). The criterion $\varepsilon > 0$ is essentially optimal as we demonstrate that edge rigidity does not necessarily hold when $\varepsilon=0$. (see Remarks~\ref{rem:non-universality} and \ref{rem:hypergraph}). We also establish Gaussian fluctuations of the largest eigenvalue in the spiked model ensuing from all entries having a sufficiently large common positive mean (see Theorem~\ref{thm:fluc}). Notably different scalings are required in the regimes $0 < \varepsilon < 1$ and $\varepsilon \ge 1$.

Incidentally, \cite{reker2022operator} carried out a moment method analysis for matrices with general entries and correlation decay of the form \eqref{eq:corr-cond-ac19} with exponent $d > 2$ (assuming further decay conditions on multivariate cumulants to deal with non-Gaussianity) to prove that the operator norm (hence the largest eigenvalue) is stochastically bounded by $1$. Closer to our work is \cite{fleermann2023large}, where the authors consider Gaussian Wishart matrices with a correlation decay similar to ours and prove a result analogous to Theorem \ref{thm:largest_ev} in that setting. However, they only study the centered case. We also note that the recent work \cite{chakrabarty2024largest} establishes Gaussian fluctuations for the largest eigenvalue for Gaussian matrices with a common positive mean, where the entries are obtained by symmetrising a stationary Gaussian field indexed by $\bbZ^2$ with an absolutely summable covariance kernel. We note however that our covariance decay condition is much weaker --- in fact, the $n \times n$ partial sums of our covariances can be as large as $O(n^{1 - \varepsilon})$. Further, the common positive mean considered in \cite{chakrabarty2024largest} is $\Theta(1)$, whereas we allow much smaller perturbations (e.g., when $\varepsilon > 1$, we can handle a common mean of order $\gg n^{-1/4}$).

\subsection{The model and our main result on the largest eigenvalue.}
Let $\left(X_{ij}\right)_{1 \le i\le j \le n}$ be a centered multivariate Gaussian vector of dimension $n(n+1)/2$ with $\Var(X_{ij}) = 1$ for all $i \le j$ and
\begin{align}\label{eq:correlation}
   \sup_{(i, j) \ne (i', j')} \big|\bbE[X_{ij} X_{i'j'}]\big| &= O\bigg(\frac{1}{n^{1 + \varepsilon}}\bigg),
\end{align}    
where $\varepsilon > 0$ is a fixed constant. Given this multivariate Gaussian vector, we consider the (symmetric) matrix $X_{n}$ with $X_{n}(i,j) = X_{ij}$ and $X_{n}(i,j)= X_{n}(j,i)$ for $i\le j$. 

Our main result is the following:
\begin{theorem}\label{thm:largest_ev}
    Let $X_{n}$ be the symmetric Gaussian random matrix descirbed above. Then $\lambda_1(n^{-1/2}X_n) \to 2$ almost surely.
\end{theorem}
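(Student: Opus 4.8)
The plan is to establish the almost sure convergence $\lambda_1(n^{-1/2}X_n) \to 2$ via the classical F\"uredi--Koml\'os method of high moments, adapted to accommodate the weak correlations permitted by \eqref{eq:correlation}. The upper bound $\limsup_n \lambda_1(n^{-1/2}X_n) \le 2$ a.s.\ is the substantive part; the matching lower bound follows immediately from the fact (a consequence of \cite{catalano2024random}) that the empirical spectral distribution converges weakly a.s.\ to the semicircle law, whose support's right endpoint is $2$, so that $\liminf_n \lambda_1 \ge 2$ a.s.\ automatically. Thus I would focus on bounding $\bbE[\Tr (n^{-1/2}X_n)^{2k_n}]$ for a suitable sequence $k_n \to \infty$ (typically $k_n \asymp (\log n)^{\alpha}$ or $k_n \asymp n^{\beta}$ for small $\beta$), showing that $\bbP(\lambda_1(n^{-1/2}X_n) > 2 + \delta) \le (2+\delta)^{-2k_n}\bbE[\Tr(n^{-1/2}X_n)^{2k_n}]$ is summable in $n$ for every $\delta > 0$, and then invoking Borel--Cantelli.

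The core computation is the expansion
\[
    \bbE\big[\Tr X_n^{2k}\big] = \sum_{i_0, i_1, \dots, i_{2k-1}} \bbE\big[X_{i_0 i_1} X_{i_1 i_2} \cdots X_{i_{2k-1} i_0}\big],
\]
a sum over closed walks of length $2k$ on the complete graph on $[n]$. Since the entries are jointly Gaussian, Wick's formula expresses each term as a sum over pair partitions of the $2k$ edge-slots, each pairing contributing a product of $k$ covariances. In the independent case only the partitions that pair up repeated edges survive, and the dominant contribution comes from walks tracing out trees (doubled edges), giving the Catalan number asymptotics; here the extra freedom is that a pairing may match two slots carrying \emph{distinct} edges $(i,j)$ and $(i',j')$, but each such ``off-diagonal'' pairing costs a factor $O(n^{-(1+\varepsilon)})$ rather than $O(1)$. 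The plan is to organize the sum by the number $r$ of such off-diagonal pairings: a pairing with $r$ off-diagonal matches contributes at most (number of vertex-labelings consistent with the pairing constraints) times $O(n^{-(1+\varepsilon)r})$. One shows that introducing an off-diagonal pairing, relative to the all-doubled-edges case, changes the number of free vertex labels by at most an additive $O(1)$ per off-diagonal pair (and the number of pairings themselves is at most $(2k)^{O(r)}$), so the total contribution of configurations with $r \ge 1$ off-diagonal pairings is down by a factor $(2k)^{O(1)} n^{-\varepsilon r}$ and sums to something negligible provided $k$ grows slowly enough that $(2k)^{O(1)} = o(n^{\varepsilon})$. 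The surviving main term, from purely doubled-edge pairings, reproduces the usual Wigner bound $\bbE[\Tr X_n^{2k}] \le C_k n^{k+1}(1 + o(1))$ with $C_k$ the $k$-th Catalan number, exactly as in \cite{furedi1981eigenvalues}.

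With the moment bound in hand, for $k = k_n$ growing like a small power of $\log n$ (enough to kill $(2+\delta)^{-2k_n}$ polynomially, hence make it summable) one gets $\bbP(\lambda_1 > 2+\delta) \le (2+\delta)^{-2k_n} C_{k_n} n^{k_n + 1}(1+o(1)) \cdot n^{-k_n} = (2+\delta)^{-2k_n} 4^{k_n} n (1+o(1))$, which using $4/(2+\delta)^2 < 1$ decays faster than any polynomial once $k_n \gg \log n$, yielding summability and thus $\limsup_n \lambda_1 \le 2 + \delta$ a.s.; intersecting over a sequence $\delta \downarrow 0$ finishes the upper bound.

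The main obstacle I anticipate is the bookkeeping of the vertex-labeling count in the presence of off-diagonal pairings: one must argue carefully that pairing two \emph{distinct} edges does not free up extra vertices in a way that could compensate the $n^{-(1+\varepsilon)}$ covariance decay, and in particular handle the interaction between several off-diagonal pairings sharing vertices (so that the graph of ``identified'' edge-slots can be complicated). The key quantitative input is that each off-diagonal covariance is $O(n^{-(1+\varepsilon)})$ \emph{uniformly}, so even a crude bound --- at most $n$ choices per genuinely free vertex, at most $(2k)^2$ ways to choose each pairing, and the observation that $r$ off-diagonal pairings can increase the vertex count by at most $r$ (hence cost at least $n^{-(1+\varepsilon)r} \cdot n^{r} = n^{-\varepsilon r}$ net) --- suffices, as long as $k_n$ is chosen so that the polynomial-in-$k_n$ losses are absorbed. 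A secondary technical point is controlling the $o(1)$ errors uniformly over the growing $k_n$, but this is standard in \FK-type arguments and does not require new ideas.
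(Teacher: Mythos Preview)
Your global strategy coincides with the paper's: the lower bound via the weak convergence of the ESD to the semicircle law from \cite{catalano2024random}, and the upper bound via Markov on $\Tr(n^{-1/2}X_n)^{2k}$ with $k$ a power of $\log n$, Wick's formula, and Borel--Cantelli. The key mechanism you isolate --- that each ``off-diagonal'' Wick pair costs a factor $n^{-(1+\varepsilon)}$ while freeing at most one extra vertex, for a net $n^{-\varepsilon}$ --- is exactly what drives the paper's estimate as well.

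The gap is in the bookkeeping you flag as the ``main obstacle'' but then dismiss with a crude bound. Two concrete issues. First, the assertion that ``the number of pairings themselves is at most $(2k)^{O(r)}$'' is not correct as a standalone statement: the total number of Wick pairings is $(2k-1)!!$, and which pairs are on- or off-diagonal depends on the walk, so one cannot simply multiply a walk count by $(2k)^{O(r)}$. Second, your reduction to the ``all-doubled-edges baseline'' breaks down for walks in which some edge is traversed an odd number of times: such walks have \emph{no} $r=0$ contribution at all, yet they do contribute in the correlated model and must be bounded directly, not as perturbations of a Wigner term. The inequality $t \le k+1+r$ alone does not control the number of (walk-shape, pairing) equivalence classes with given $t$ and $r$.

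The paper resolves exactly this by organizing the walk side first, via \FK{} syllabification: each closed word is parsed into an FK sentence with $m$ words, the counts are controlled by $\#\Gamma(t,2k+1,m) \le 2^{2k+1-m}\binom{2k}{m-1}t^{2(m-1)}$ together with $m = E_1 - 2t + 2 + (2k+1)$, and the Wick sum is then analyzed via a six-way case split according to whether each matched pair lies in $E_a^1$, $E_a^2$, or $E_a^3$ (edges traversed once, twice, or at least three times). This edge-multiplicity decomposition is precisely the device that makes rigorous what your ``crude bound'' asserts; without it (or an equivalent substitute), the argument you sketch does not close.
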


\begin{remark}
    Although, for the sake of simplicity, we have assumed that $\Var(X_{ij}) = 1$ for all $i, j$, it is not difficult to see that our results continue to hold if
    \begin{equation}\label{eq:variance}
        \sup_{1 \le i \le j \le n} |\Var(X_{ij}) - 1| = O\left(\frac{1}{(\log n)^2 }\right).
    \end{equation}
    This will be the case in several examples later.
\end{remark}
\begin{remark}
    By following the proof of Theorem \ref{thm:largest_ev} carefully, one can see that \eqref{eq:correlation} can be strengthened to 
    \begin{align*}\label{eq:correlationstrong}
   \sup_{(i, j) \ne (i', j')} \big|\bbE[X_{ij} X_{i'j'}]\big| &= O\bigg(\frac{1}{n (\log n)^\tau}\bigg),
\end{align*}
for any $\tau >4$.
\end{remark}
\begin{figure}[!t]
    \centering
    \includegraphics[width = 0.9\textwidth]{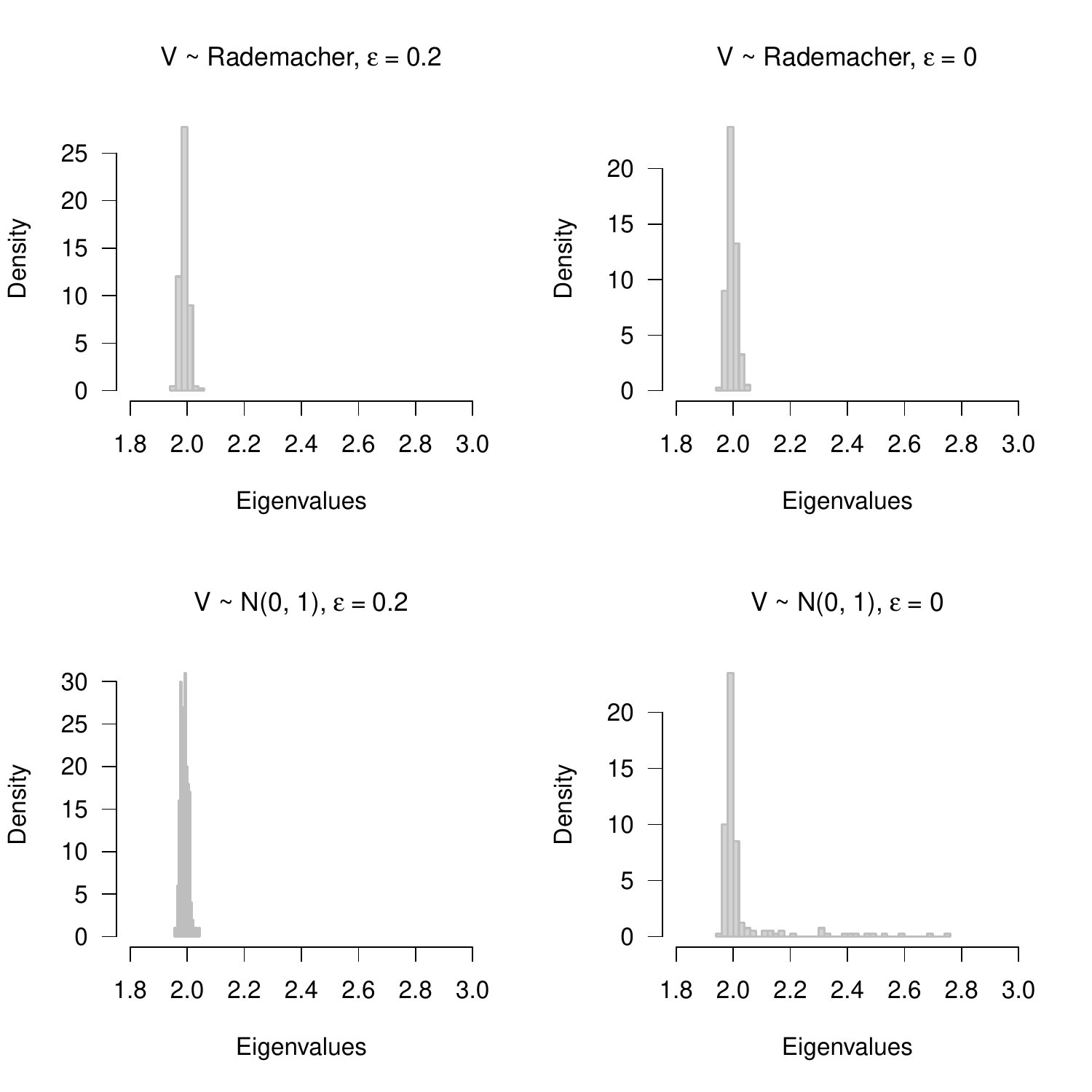}
    \caption{Histograms of the largest eigenvalue of $n^{-1/2} X_n$ from the model described in \eqref{eq:test_model} with $\alpha_n^2 = n^{-(1 + \varepsilon)}$ for $n = 1000$ based on $200$ Monte Carlo simulations. In the fourth setting, we empirically observed that in about $84\%$ of the Monte Carlo runs the largest eigenvalue fell within the range $[1.968, 2.032]$.}
    \label{fig:edge}
\end{figure}

\begin{remark}\label{rem:non-universality}
We note here that the correlation condition in \eqref{eq:correlation} cannot be dropped to $O(1/n)$. To see this, consider the following test model:
\begin{equation}\label{eq:test_model}
    X_n = W_n + \alpha_n V \bone \bone^\top,
\end{equation}
where $n^{-1/2} W_n$ is a GOE random matrix, and $V$ is an independent random variable with zero mean and unit variance. Note that for $\{i, j\} \ne \{i', j'\}$,
\[
    \Cov(X_{ij}, X_{i'j'}) = \Cov(W_{ij} + \alpha_n V, W_{i'j'} + \alpha_n V) = \alpha_n^2.
\]
Also, $\Var(X_{ij}) = 1 + \alpha_n^2$. If we take $\alpha_n = \frac{1}{\sqrt{n}}$ and $V$ is a Rademacher random variable (i.e. a random sign), then from the Baik-Ben Arous-P\'{e}ch\'{e} phase transition for spiked Wigner models \cite{BBPtrransition2005, Mireillelargesteigen2009, capitaine2009largest, benaych2011},
we see that the largest eigenvalue of $n^{-1/2} X_n$ converges almost surely to $2$.

On the other hand, if $V$ is itself a standard Gaussian, then with probability only $\bbP(V \le 1) \approx 0.84$, the largest eigenvalue of $n^{-1/2} X_n$ converges to $2$.
\end{remark}

In Figure~\ref{fig:edge}, we show the histograms of the largest eigenvalues of model described in \eqref{eq:test_model} in several different settings. This empirically demonstrates the non-universality mentioned in Remark~\ref{rem:non-universality}.

\begin{remark}\label{rem:hypergraph}
    Another situation where the correlations are all $O(1/n)$  but a limit other than $2$ emerges is the case of adjacency matrices of Erd\H{o}s-R\'{e}nyi $r$-uniform hypergraphs for fixed $r$ \cite{mukherjee2024spectra}. In fact, in this model only $\Theta(n^2)$ of the $\Theta(n^4)$ correlations are $\Theta(1/n)$ and the rest are $\Theta(1/n^2)$. It was shown in \cite{mukherjee2024spectra} that when $r \ge 4$, then the largest eigenvalue converges almost surely to $\sqrt{r - 2} + \frac{1}{\sqrt{r - 2}}$.
\end{remark}

\begin{remark}
In \cite{mukherjee2024spectra}, the following Gaussian random matrix was considered:
\[
    X_n = \alpha_n U \bone \bone^\top + \beta_n (\bone \bV^\top + \bV \bone^\top) + \theta_n Z_n
\]
where $U$, $\bV = (V_i)_{1 \le i \le n}$ are i.i.d. standard Gaussian random variables and $n^{-1/2} Z_n$ is an independent GOE random matrix. Note that for $i \neq j$ and $i' \neq j'$,
\begin{align*}
   \Cov(X_{n,ij}, X_{n,i'j'}) &= \begin{cases}
        \alpha_n^2 & \text{if } |\{i, j\} \cap \{i', j'\}| = 0, \\
        \alpha_n^2 + \beta_n^2 & \text{if } |\{i, j\} \cap \{i', j'\}| = 1, \\
        \alpha_n^2 + 2 \beta_n^2 + \theta_n^2 & \text{if } |\{i, j\} \cap \{i', j'\}| = 2.
   \end{cases}
\end{align*}
In addition, 
\[
    \Var(X_{n,ii}) = \alpha_n^2 + 4 \beta_n^2 + \theta_n^2.
\]
Let $\gamma < 1 + \varepsilon$. If we set $\alpha_n = n^{-(1+\varepsilon)/2}$, $\beta_n = \sqrt{n^{-\gamma} - n^{-(1 + \varepsilon)}}$, and $\theta_n = \sqrt{1 - \alpha_n^2 - 2\beta_n^2} = \sqrt{1 - 2 n^{-\gamma} + n^{-(1 + \varepsilon)}}$, then
\[
   \Cov(X_{n, ij}, X_{n, i^{'}j^{'}}) = \begin{cases}
       \frac{1}{n^{1 + \varepsilon}} & \text{if} \ \ \big|\{i, j\} \cap \{i^{'}, j^{'} \}\big| = 0, \\
       \frac{1}{n^{\gamma}} & \text{if} \ \ \big|\{i, j\} \cap \{i^{'}, j^{'} \}\big| = 1,
   \end{cases}
\]
$\Var(X_{n, ij}) = 1$ for $i \ne j$ and $\Var(X_{n, ii}) = 1 + O(n^{-\gamma})$. Note here that only $\Theta(n^2)$ of the $\Theta(n^4)$ correlations are of a higher order (namely, $n^{-\gamma}$), the rest being $O(n^{-(1 + \varepsilon)})$. For this model, it can be shown that
if $\gamma \ge 1$, then $\lambda_1(n^{-1/2} X_n)$ converges almost surely to $2$ and if $\gamma < 1$, then $\lambda_1(n^{-(2 - \gamma)/2}X_n) \to 1$ almost surely. This example shows that $\Theta(n^2)$ of the correlations can be increased up to order $n^{-1}$ while preserving the almost sure limit of $2$ for $\lambda_1(n^{-1/2} X_n)$. However, any further increase will lead to a blow-up.
\end{remark}

\begin{figure}[!t]
    \centering
    \includegraphics[width = 0.9\textwidth]{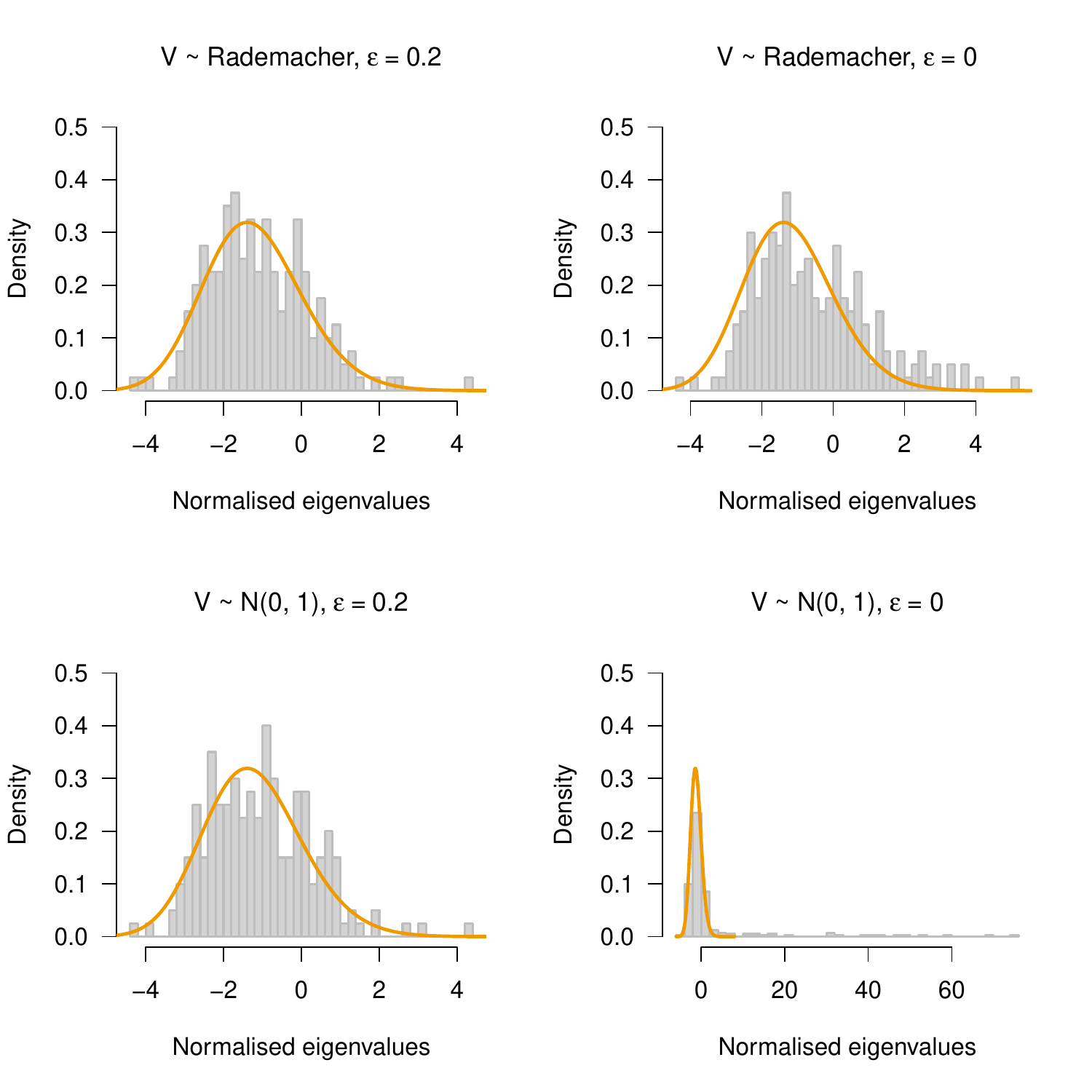}
    \caption{Histograms of $n^{2/3}(\lambda_1(n^{-1/2} X_n) - 2)$ from the model described in \eqref{eq:test_model} with $\alpha_n^2 = n^{-(1 + \varepsilon)}$ for $n = 1000$ based on $200$ simulations. The orange curves depict the density of the GOE Tracy-Widom distribution.}
    \label{fig:edge_fluc}
\end{figure}

Based on simulations shown in Figure~\ref{fig:edge_fluc}, we suspect Tracy-Widom fluctuations for the largest eigenvalue (after centering at $2$ and scaling by $n^{2/3}$) under the same correlation constraints. This will be studied in a future work.

We now present an example of a general class of non-Gaussian matrices obeying the correlation constraint \eqref{eq:correlation}.
\begin{example}
    Let $N$ be a positive integer potentially dependent on $n$. Consider $N$ deterministic matrices $Q_{\ell}, 1 \le \ell \le N$ satisfying the following two conditions:
    \begin{align}\label{ass:Q_entries_1}
        \sup_{1 \le i \le j \le n} \bigg|\sum_{\ell = 1}^N (Q_\ell)^2_{ij} - 1\bigg| &= O\bigg(\frac{1}{(\log n)^2}\bigg); \\ \label{ass:Q_entries_2}
        \sup_{(i, j) \ne (i', j')} \bigg|\sum_{\ell = 1}^N (Q_\ell)_{ij} (Q_{\ell})_{i'j'}\bigg| &= O\bigg(\frac{1}{n^{1 + \varepsilon}}\bigg). 
    \end{align}
    Now consider a random matrix of the form
    \begin{equation}\label{eq:linear_ensemble}
        X_n \equiv X_n(\mathbf{\Psi}) = \sum_{\ell = 1}^N \psi_\ell Q_\ell,
    \end{equation}
    where $\mathbf{\Psi} = (\psi_{\ell})_{1 \le \ell \le N}$ is a vector of independent zero mean unit variance random variables. Note then that for all $i, j$, $\bbE[X_{ij}] = 0$ and
    \begin{align*}
        \bbE[X^2_{n, ij}] = \sum_{\ell = 1}^N \bbE[\psi^2_{\ell}] (Q_\ell)^2_{ij}  + 2\sum_{ \ell \neq \ell'} \bbE[\psi_{\ell} \psi_{\ell'}]  (Q_\ell)_{ij}  (Q_{\ell'})_{ij}  = \sum_{\ell = 1}^N (Q_\ell)^2_{ij}.
    \end{align*}
    Further, for $(i,j) \neq (i', j')$,
    \begin{align*}
        \bigg|\bbE[X_{n, ij} X_{n, i'j'}]\bigg| = \bigg|\bbE\bigg[ \sum_{\ell = 1}^N \psi_\ell  (Q_\ell)_{ij} \sum_{\ell' = 1}^N \psi_{\ell'} (Q_{\ell'})_{i'j'} \bigg]\bigg|
        = \bigg|\sum_{\ell = 1}^N (Q_\ell)_{ij} (Q_{\ell})_{i'j'} \bigg|. 
    \end{align*}
    Thus the ensemble of matrices described in \eqref{eq:linear_ensemble} satisfy the correlation constraints \eqref{eq:correlation} and the variance condition \eqref{eq:variance}.
\end{example}

The recent work \cite{brailovskaya2024universality} considers matrices of the form $Z = Z_0 + \sum_{i = 1}^N Z_i$, where $Z_0$ is a $ n \times n$ deterministic matrix and $Z_1, \ldots, Z_N$ be any independent $n \times n$ self-adjoint centered random matrices. Using these results, we may prove universality of the largest eigenvalue for matrices of the form \eqref{eq:linear_ensemble}. 

    \begin{proposition}\label{prop:univ}
        Let $\mathbf{\Psi}$ be a random vector with independent zero-mean, unit variance and uniformly bounded co-ordinates. Consider the matrix ensemble  $X_n(\mathbf{\Psi})$ described in \eqref{eq:linear_ensemble}. Suppose further that the matrices $Q_{\ell}, 1 \le \ell \le N$, satisfy the condition 
        \begin{align}\label{ass:operator_norm_Q}
            \max_{1 \le \ell \le N} \|Q_{\ell}\|_{\op} = o \bigg( \frac{\sqrt{n}}{(\log n)^2} \bigg).
        \end{align}
        Then $\lambda_1(n^{-1/2}X_n(\mathbf{\Psi})) \convas 2$.
    \end{proposition}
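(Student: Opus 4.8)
The plan is to transfer the conclusion of Theorem~\ref{thm:largest_ev} from the Gaussian case to the general ensemble $X_n(\mathbf{\Psi})$ by means of the universality principle of \cite{brailovskaya2024universality}, which asserts that for a sum of independent self-adjoint random matrices one may replace the law of the coefficients by Gaussians at a controlled cost. Introduce the Gaussian comparison model: let $\mathbf{g} = (g_\ell)_{1 \le \ell \le N}$ be i.i.d.\ standard Gaussians and set $G_n := X_n(\mathbf{g}) = \sum_{\ell = 1}^N g_\ell Q_\ell$. By the covariance computation carried out just before the proposition, together with \eqref{ass:Q_entries_1} and \eqref{ass:Q_entries_2}, the matrix $G_n$ is centered jointly Gaussian with
\[
    \bbE[G_{n,ij}^2] = \sum_{\ell = 1}^N (Q_\ell)_{ij}^2 = 1 + O\big((\log n)^{-2}\big) \qquad\text{and}\qquad \sup_{(i,j) \ne (i',j')} \big|\bbE[G_{n,ij} G_{n,i'j'}]\big| = O\big(n^{-(1 + \varepsilon)}\big).
\]
Hence $G_n$ satisfies the hypotheses of Theorem~\ref{thm:largest_ev} together with the variance relaxation recorded in the first Remark, so that $\lambda_1(n^{-1/2} G_n) \convas 2$.

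It remains to show that $\lambda_1(n^{-1/2} X_n(\mathbf{\Psi}))$ and $\lambda_1(n^{-1/2} G_n)$ differ by $o(1)$. Write $X_n(\mathbf{\Psi}) = \sum_{\ell = 1}^N Z_\ell$ with $Z_\ell := \psi_\ell Q_\ell$; the $Z_\ell$ are independent, self-adjoint and centered, so this fits the template of \cite{brailovskaya2024universality} with $Z_0 = 0$. Since $\bbE[\psi_\ell^2] = 1$ and $\bbE[\psi_\ell \psi_{\ell'}] = 0$ for $\ell \ne \ell'$, the covariance structure of $X_n(\mathbf{\Psi})$ is identical to that of $G_n$; in particular the variance parameter of \cite{brailovskaya2024universality} satisfies $\sigma(X_n(\mathbf{\Psi}))^2 = \|\sum_\ell Q_\ell^2\|_{\op} = n(1 + o(1))$, because the diagonal entries of $\sum_\ell Q_\ell^2$ equal $\sum_j \bbE[G_{n,ij}^2] = n(1 + o(1))$ while its off-diagonal entries are $O(n^{-\varepsilon})$ by \eqref{ass:Q_entries_2}. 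Moreover the uniform bound $\sup_\ell \|\psi_\ell\|_\infty \le K$ gives $\max_\ell \|Z_\ell\|_{\op} \le K \max_\ell \|Q_\ell\|_{\op}$, which by \eqref{ass:operator_norm_Q} is $o\big(\sqrt{n}/(\log n)^2\big) = o\big(\sigma(X_n(\mathbf{\Psi}))/(\log n)^2\big)$. Feeding these parameters into the edge-universality estimate of \cite{brailovskaya2024universality} --- whose error is a product of a fractional power of $\max_\ell\|Z_\ell\|_{\op}$ and a fractional power of $\sigma(X_n(\mathbf{\Psi}))$, up to a polylogarithmic factor in $n$ that the $(\log n)^2$ in \eqref{ass:operator_norm_Q} is calibrated to absorb --- shows that this error is $o(\sqrt{n})$, and that in its tail-bound form the estimate yields $\bbP\big(|\lambda_1(n^{-1/2}X_n(\mathbf{\Psi})) - \lambda_1(n^{-1/2} G_n)| > \delta\big)$ summable in $n$ for every fixed $\delta > 0$.

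Finally, to upgrade to almost sure convergence, I would combine the two steps: for fixed $\delta > 0$ and all large $n$, the event $\{|\lambda_1(n^{-1/2} X_n(\mathbf{\Psi})) - 2| > \delta\}$ is contained in $\{|\lambda_1(n^{-1/2} X_n(\mathbf{\Psi})) - \lambda_1(n^{-1/2} G_n)| > \delta/2\} \cup \{|\lambda_1(n^{-1/2} G_n) - 2| > \delta/2\}$; the first event has summable probability by Step~2, and the second does too, since $\lambda_1(n^{-1/2}G_n)$ converges a.s.\ to $2$ (Step~1) and concentrates about its mean with Gaussian tails on the scale $\sigma_*(G_n)/\sqrt n = o(1)$, where $\sigma_*(G_n) = O(n^{(1-\varepsilon)/2})$ because the $n^2 \times n^2$ covariance matrix has unit diagonal and $O(n^{-(1+\varepsilon)})$ off-diagonal entries. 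Borel--Cantelli then gives $\lambda_1(n^{-1/2} X_n(\mathbf{\Psi})) \convas 2$. I expect the main obstacle to be Step~2: one has to identify the correct matrix parameters in \cite{brailovskaya2024universality}, verify $\sigma(X_n(\mathbf{\Psi})) \asymp \sqrt{n}$, and check carefully that hypothesis \eqref{ass:operator_norm_Q} is exactly strong enough to kill the logarithmic losses in the universality bound at the confidence level needed for Borel--Cantelli; the rest is bookkeeping.
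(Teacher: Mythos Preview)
Your approach is correct and essentially the same as the paper's: apply Theorem~2.6 of \cite{brailovskaya2024universality} to compare the spectrum of $n^{-1/2}X_n(\mathbf{\Psi})$ with that of its Gaussian counterpart, verify the parameters $\sigma$, $\sigma_*$, $R$ are small enough under \eqref{ass:operator_norm_Q} at confidence level $t \asymp \log n$, and then invoke Theorem~\ref{thm:largest_ev}. Two small points: the paper's error term $\varpi(t) = \sigma_*(Z)t^{1/2} + R(Z)^{1/3}\sigma(Z)^{2/3}t^{2/3} + R(Z)t$ also contains the $\sigma_*$ contribution, which you should include in Step~2 rather than only in Step~3; and your final assembly is more complicated than necessary --- once Borel--Cantelli gives $\lambda_1(n^{-1/2}X_n(\mathbf{\Psi})) - \lambda_1(n^{-1/2}G_n) \convas 0$, you can simply add this to the almost sure limit $\lambda_1(n^{-1/2}G_n)\convas 2$ from Theorem~\ref{thm:largest_ev}, with no separate concentration argument for $G_n$ needed.
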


    \begin{remark}
        Proposition~\ref{prop:univ} uses Theorem~2.6 of \cite{brailovskaya2024universality} under the hood, which makes an assumption of uniform boundedness of the operator norms of the matrices $Z_i$. One may instead use Theorem~2.7 of \cite{brailovskaya2024universality} to remove the assumption of uniform boundedness of the co-ordinates of $\mathbf{\Psi}$ in Proposition~\ref{prop:univ}. We omit the details.
    \end{remark}

    \noindent
    \textbf{A randomised construction of the matrices $Q_{\ell}$.}
    We must show that the constraints \eqref{ass:Q_entries_1}, \eqref{ass:Q_entries_2} and \eqref{ass:operator_norm_Q} are sufficiently general, to allow for a large class of matrices. Towards that end, consider the following geometric interpretation: Associate with each pair $(i, j)$ a vector $\bv^{(ij)} := ((Q_1)_{ij}, (Q_2)_{ij}, \ldots, (Q_N)_{ij}) \in \bbR^N$. Note then that in order to have \eqref{ass:Q_entries_1}, we want the vectors $\bv^{(ij)}$ to be approximately of unit norm in the sense that $\sup_{1 \le i \le j \le n} |\|\bv^{ij}\|_2 - 1| = O(\tfrac{1}{(\log n)^2})$. Further, we must have the uniform approximate orthogonality relation
    \[
        \sup_{(i, j) \ne (i', j')} \big|\langle \bv^{(ij)}, \bv^{(i'j')} \rangle\big| = O(n^{-(1 + \varepsilon)}),
    \]
    for the $Q_\ell$'s to satisfy \eqref{ass:Q_entries_2}. The following proposition show that sufficiently long i.i.d. sparse Radamacher random vectors $\bv^{(ij)}$ satisfy these conditions.
    \begin{proposition}\label{prop:rad-construction}
    Let $N \ge 1$ and $p \in (0, 1/2]$. Let $(\eta^{(ij)}_\ell)_{1 \le i \le j \le n, \, 1 \le \ell \le N}$ be i.i.d. sparse \textsf{Rademacher}$(p)$ random variables, i.e. having distribution  $p \delta_{-1} + (1 - 2p) \delta_{0} + p \delta_1$. Construct $N$ symmetric $n \times n$ matrices $Q_\ell, \ell = 1, \ldots, N$, by setting for $1 \le i \le j \le n$,
    \[
        (Q_{\ell})_{ji} = (Q_{\ell})_{ij} = \frac{\eta^{(ij)}_{\ell}}{\sqrt{2Np}}.
    \]
    \begin{enumerate}
        \item [(a)] Suppose that $p \to 0$ and $np = \Omega(1)$. There is a constant $C > 0$ such that if $N \ge C n^{2 + 2\varepsilon} \log n$ then with probability at least $1 - O(n^{-2})$, the matrices $Q_{\ell}$ satisfy \eqref{ass:Q_entries_1} and $\eqref{ass:Q_entries_2}$.
        \item [(b)] There exist constants $C_0 \ge 1, C_1 > 0$ such that if $p \ll \frac{1}{(\log n)^4}$, $np \ge C_0 \log n$ and $N \ge C_1 n^{2 + 2\varepsilon} \log n$, then with probability at least $1 - O(n^{-2})$, the matrices $Q_{\ell}$ satisfy \eqref{ass:operator_norm_Q}.
    \end{enumerate}
    \end{proposition}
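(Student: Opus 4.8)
The plan is to treat the three constraints \eqref{ass:Q_entries_1}, \eqref{ass:Q_entries_2}, \eqref{ass:operator_norm_Q} separately; in each case one identifies the relevant sum of independent random variables, applies a standard concentration inequality, and concludes with a union bound. The number of pairs one must control is precisely what forces the scaling $N \gtrsim n^{2+2\varepsilon}\log n$.

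For part (a), fix $i \le j$ and note that $\sum_{\ell=1}^N (Q_\ell)_{ij}^2 = \frac{1}{2Np}\sum_{\ell=1}^N (\eta^{(ij)}_\ell)^2$, where the $(\eta^{(ij)}_\ell)^2$ are i.i.d.\ $\Ber(2p)$; hence this is $(2Np)^{-1}$ times a $\mathrm{Binomial}(N,2p)$ variable and has mean exactly $1$. A multiplicative Chernoff bound gives $\bbP\big(\big|\sum_\ell (Q_\ell)_{ij}^2 - 1\big| > t\big) \le 2\exp(-ct^2 Np)$ for $t \le 1$; taking $t \asymp (\log n)^{-2}$ and using $Np \gtrsim n^{1+2\varepsilon}\log n \gg (\log n)^5$ (which follows from $np = \Omega(1)$ and $N \ge C n^{2+2\varepsilon}\log n$), the failure probability for a fixed $(i,j)$ is $o(n^{-4})$, and a union bound over the $O(n^2)$ pairs yields \eqref{ass:Q_entries_1} with probability $1 - O(n^{-2})$. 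For \eqref{ass:Q_entries_2}, fix $(i,j)\ne(i',j')$; then $\eta^{(ij)}_\ell$ and $\eta^{(i'j')}_\ell$ are independent, so $\sum_{\ell=1}^N (Q_\ell)_{ij}(Q_\ell)_{i'j'} = \frac{1}{2Np}\sum_{\ell=1}^N \eta^{(ij)}_\ell\eta^{(i'j')}_\ell$ is $(2Np)^{-1}$ times a sum of $N$ i.i.d.\ mean-zero variables, each bounded by $1$ with variance $4p^2$. Since $p \gg n^{-(1+\varepsilon)}$ (again from $np = \Omega(1)$), the target deviation, of order $Np\cdot n^{-(1+\varepsilon)}$, lies well inside the sub-Gaussian range of Bernstein's inequality, giving a failure probability $\le 2\exp(-c N n^{-2(1+\varepsilon)})$ for a fixed pair of pairs; choosing $C$ large in $N \ge C n^{2+2\varepsilon}\log n$ makes this $o(n^{-6})$, and a union bound over the $O(n^4)$ pairs of pairs gives \eqref{ass:Q_entries_2}. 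Intersecting the two events proves (a).

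For part (b), each $Q_\ell$ equals $(2Np)^{-1/2}$ times a symmetric matrix whose above-diagonal entries $\eta^{(ij)}_\ell$ are i.i.d., mean-zero, bounded by $1$, with variance $2p$, i.e.\ a sparse symmetric signed random matrix. First I would bound $\bbE\|Q_\ell\|_{\op}$ by a known sharp estimate for norms of random matrices with independent entries (e.g.\ the Bandeira--van Handel inequality): with $\max_{ij}|(Q_\ell)_{ij}| = (2Np)^{-1/2}$ and $\max_i\sqrt{\sum_j \bbE (Q_\ell)_{ij}^2} = \sqrt{n/N}$, this gives $\bbE\|Q_\ell\|_{\op} \lesssim \sqrt{n/N} + \sqrt{(\log n)/(Np)}$, and $np \ge C_0\log n$ makes the second term $O(\sqrt{n/N})$ as well; since $N \ge C_1 n^{2+2\varepsilon}\log n \gg (\log n)^4$ this is already $o(\sqrt n/(\log n)^2)$. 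Next, since $A \mapsto \|A\|_{\op}$ is convex and $1$-Lipschitz with respect to the Frobenius norm and the entries of $Q_\ell$ lie in an interval of length $O((Np)^{-1/2})$, Talagrand's concentration inequality for convex Lipschitz functions of independent bounded variables yields $\bbP(\|Q_\ell\|_{\op} > \bbE\|Q_\ell\|_{\op} + t) \le C\exp(-c t^2 Np)$ (the median and mean differ by a negligible amount). Taking $t \asymp \sqrt n/(\log n)^2$ and using $np \ge C_0\log n$, the exponent is $\asymp nNp/(\log n)^4 \gtrsim N/(\log n)^3 \gtrsim n^{2+2\varepsilon}/(\log n)^2$, which dominates $\log N$; a union bound over the $N$ matrices $Q_\ell$ (with $N$ at most polynomial in $n$) gives $\max_\ell\|Q_\ell\|_{\op} = o(\sqrt n/(\log n)^2)$ with probability $1 - O(n^{-2})$. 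The hypotheses $np \ge C_0\log n$ and $p \ll (\log n)^{-4}$ serve to place us in a sparsity regime where these degree-regularity and concentration estimates hold and are compatible with the conditions of part (a).

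The main obstacle is the operator-norm bound in part (b): one needs a tail estimate for $\|Q_\ell\|_{\op}$ strong enough to survive a union bound over the $N \asymp n^{2+2\varepsilon}\log n$ matrices, so the crude McDiarmid bounded-differences inequality (whose variance proxy here is of order $n^2/(Np)$) is far too weak, and one genuinely needs Talagrand's convex concentration inequality, which exploits that flipping a single entry perturbs $\|Q_\ell\|_{\op}$ by only $O((Np)^{-1/2})$ in a way that aggregates like an $\ell^2$- rather than an $\ell^1$-sum. Controlling $\bbE\|Q_\ell\|_{\op}$ is routine once one invokes a black-box bound for norms of matrices with independent entries; the only subtlety there is checking that in the sparse regime $np \gtrsim \log n$ the ``$\sqrt{\log n}$ times largest entry'' term does not dominate $\sqrt{n/N}$.
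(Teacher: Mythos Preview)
Your treatment of part~(a) matches the paper's: Bernstein (or Chernoff) for each fixed pair, then a union bound over the $O(n^2)$ pairs for \eqref{ass:Q_entries_1} and over the $O(n^4)$ pairs of pairs for \eqref{ass:Q_entries_2}.

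For part~(b) you take a genuinely different route. The paper does not separate expectation from fluctuation: it invokes a black-box tail estimate for sparse random matrices (a symmetric analogue of Theorem~1.7 in \cite{basak2017invertibility}), obtaining directly $\bbP(\|Q_\ell\|_{\op} \ge C\sqrt{np}) \le e^{-cnp}$, and then union-bounds over $\ell$; the hypothesis $p \ll (\log n)^{-4}$ is exactly what turns $\sqrt{np}$ into $o(\sqrt n/(\log n)^2)$. Your argument instead bounds $\bbE\|Q_\ell\|_{\op}$ via Bandeira--van~Handel and controls the tail by Talagrand's convex-Lipschitz inequality. This is more self-contained and in fact gives more: since your expectation bound $\sqrt{n/N}$ already exploits the $N^{-1/2}$ normalisation, your proof does not really need the upper bound $p \ll (\log n)^{-4}$ at all, and because your tail exponent $\asymp t^2 N p$ grows with $N$, the union bound over $\ell$ succeeds for \emph{every} $N$ above the stated threshold---so your parenthetical ``with $N$ at most polynomial in $n$'' is unnecessary and should be dropped. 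Two minor points of precision: to land on the little-$o$ in \eqref{ass:operator_norm_Q} you should choose $t = o(\sqrt n/(\log n)^2)$ (say $t = \sqrt n/(\log n)^{3}$) rather than $t \asymp \sqrt n/(\log n)^2$, which your exponent easily accommodates; and when computing the Lipschitz constant of $\eta \mapsto \|Q_\ell\|_{\op}$ remember that each off-diagonal $\eta^{(ij)}_\ell$ governs two matrix entries, which costs only a harmless constant.
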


\subsection{Fluctuations of the largest eigenvalue when the entries have non-zero mean}
Now suppose the same setting as \eqref{eq:correlation}, but we have a non-zero mean $\mu = \frac{\lambda}{\sqrt{n}}$ (with $\lambda \le D \sqrt{n}$ for some $D > 0$) for each entry, i.e. we now consider the matrix
\begin{equation}\label{eq:non-zero_mean}
    Y_n = X_n + \mu \bone \bone^\top.
\end{equation}
Let $\lambda_1$ denote the largest eigenvalue of $Y$. To find the fluctuations of $\lambda_1$ we follow the approach of \cite{furedi1981eigenvalues}, suitably modifying it along the way to accommodate our correlation structure \eqref{eq:correlation}.

\begin{theorem}\label{thm:fluc}
Consider the matrix $Y_n$ defined in \eqref{eq:non-zero_mean}, where the entries of $X_n$ satisfy the correlation constraint \eqref{eq:correlation}. We have the following representation for its largest eigenvalue:
\[
    \sqrt{n} \bigg[\lambda_1(n^{-1/2} Y_n) - \bigg(\lambda + \frac{1}{\lambda}\bigg)\bigg] = \frac{1}{n}\sum_{i, j} X_{ij} + \frac{\sqrt{n}}{\lambda} \cdot O_P(n^{-\frac{\min\{\varepsilon, 1\}}{2}})) + O_P\bigg(\frac{\sqrt{n}}{\lambda^2}\bigg).
\]
\end{theorem}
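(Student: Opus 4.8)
The plan is to carry out the F\"uredi--Koml\'os-type perturbative analysis of the top eigenvector of $n^{-1/2} Y_n$ around the rank-one spike $\frac{\lambda}{\sqrt n}\bone\bone^\top$. Write $A = n^{-1/2}X_n$ and $B = n^{-1/2}Y_n = A + \frac{\lambda}{n}\bone\bone^\top$. The spike has a single large eigenvalue $\lambda$ with normalized eigenvector $u_0 = \bone/\sqrt n$, while the bulk $A$ has operator norm $2 + o(1)$ almost surely by Theorem~\ref{thm:largest_ev} (applied to $X_n$). When $\lambda \gg \|A\|_{\op}$, classical rank-one perturbation theory (e.g. the secular-equation / Bauer--Fike approach) tells us that $B$ has a top eigenvalue $\lambda_1$ close to $\lambda$, isolated from the rest of the spectrum, and we can expand $\lambda_1$ via the self-consistent equation. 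Concretely, if $v$ is the unit top eigenvector of $B$, then projecting the eigen-equation onto $u_0$ gives
\[
    \lambda_1 \langle u_0, v\rangle = \langle u_0, A v\rangle + \lambda \langle u_0, v\rangle,
\]
and iterating the resolvent identity $v = (\lambda_1 I - A)^{-1}\frac{\lambda}{\sqrt n}\langle u_0, v\rangle \bone$ (legitimate since $\lambda_1 > \|A\|_{\op}$) yields the secular equation
\[
    \lambda_1 = \lambda + \lambda \cdot u_0^\top (\lambda_1 I - A)^{-1} u_0 \cdot \lambda
    = \lambda + \lambda^2 \sum_{k \ge 0} \frac{u_0^\top A^k u_0}{\lambda_1^{k+1}}.
\]

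Next I would extract the leading terms from this series. The $k=0$ term contributes $\lambda^2/\lambda_1 = \lambda + O(1/\lambda)$ after solving to first order, accounting for the $\lambda + \frac1\lambda$ centering. The $k=1$ term is $\lambda^2 u_0^\top A u_0/\lambda_1^2 = \frac{\lambda^2}{\lambda_1^2}\cdot \frac{1}{n^{3/2}}\sum_{i,j}X_{ij}$; multiplying through by $\sqrt n$ and using $\lambda_1/\lambda = 1 + o(1)$ produces exactly the announced main term $\frac1n\sum_{i,j}X_{ij}$, up to a multiplicative $(1+o(1))$ which I would need to absorb into the error terms (this is fine because $\frac1n\sum X_{ij}$ has standard deviation at most $O(n^{(1-\min\{\varepsilon,1\})/2})$ by the covariance bound~\eqref{eq:correlation}, so an $o(1)$ relative perturbation is lower order). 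The $k=2$ term is $\frac{\lambda^2}{\lambda_1^3} u_0^\top A^2 u_0$; here $u_0^\top A^2 u_0 = \frac{1}{n^2}\sum_{i}(\sum_j X_{ij})^2 = \frac1n \cdot \frac1n\sum_i (\text{row sum})^2$, whose expectation is $\frac1n\sum_{i,j}\Var(X_{ij}) + (\text{cross terms}) = 1 + o(1)$ by~\eqref{eq:correlation} and concentrates; so this term is $\frac{1+o(1)}{\lambda} + (\text{fluctuation})$, and multiplied by $\sqrt n$ gives the $O_P(\sqrt n/\lambda)$ contribution — but I should be careful: its fluctuation around $\frac{1}{\lambda}$ is what carries the $n^{-\min\{\varepsilon,1\}/2}$ rate, so a moment computation of $\Var(u_0^\top A^2 u_0)$ using~\eqref{eq:correlation} is needed to show it is $O(n^{-\min\{\varepsilon,1\}})$, matching the stated $\frac{\sqrt n}{\lambda}O_P(n^{-\min\{\varepsilon,1\}/2})$. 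Finally the tail $\sum_{k\ge 3}$ is bounded in operator norm by $\frac{\lambda^2}{\lambda_1^3}\sum_{k\ge 0}(\|A\|_{\op}/\lambda_1)^k \cdot \|A\|_{\op} \lesssim \frac{\lambda^2}{\lambda_1^3}\cdot\frac{\|A\|_{\op}^3}{1 - \|A\|_{\op}/\lambda_1} = O(1/\lambda)$ almost surely since $\|A\|_{\op} = 2 + o(1)$, contributing to the $O_P(\sqrt n/\lambda^2)$ term after rescaling — here I need $\lambda \gg 1$ for the geometric series to converge, which holds in all regimes of interest.

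The main obstacle I anticipate is the moment analysis of the $k=2$ term: establishing that $u_0^\top A^2 u_0 = 1 + \frac1\lambda \cdot(\text{negligible}) + O_P(n^{-\min\{\varepsilon,1\}/2})$ with the precise rate requires computing $\bbE[(u_0^\top A^2 u_0 - 1)^2]$, which expands into a sum over quadruples of indices weighted by products of covariances $\bbE[X_{ij}X_{kl}]$. Controlling this sum — separating the diagonal/paired contributions (which give $O(1/n)$) from the genuinely correlated contributions (bounded via~\eqref{eq:correlation} by $n^{-(1+\varepsilon)}$ times the number of terms, i.e. $O(n^{-\varepsilon})$ or better, and $O(1/n)$ when $\varepsilon \ge 1$) — is the crux, and is where the threshold $\min\{\varepsilon,1\}$ enters. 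A secondary technical point is making the secular-equation expansion rigorous: one must first show $\lambda_1 > \|A\|_{\op} + \delta$ with high probability (so that $(\lambda_1 I - A)^{-1}$ is well-defined with controlled norm and the Neumann series converges), which follows from a standard interlacing/monotonicity argument together with $\|A\|_{\op} \to 2$ and $\lambda \to \infty$; and that $\lambda_1$ is the unique eigenvalue in $(\|A\|_{\op}, \infty)$, again by rank-one interlacing. Everything else — collecting the error terms, replacing $\lambda_1$ by $\lambda$ in denominators at the cost of $O_P(1/\lambda^2)$ corrections, and verifying the $O_P$ bounds are uniform over the allowed range of $\lambda \le D\sqrt n$ — is bookkeeping.
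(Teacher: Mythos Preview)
Your overall strategy---expand the largest eigenvalue of $B=A+\lambda u_0u_0^\top$ via the secular equation and a Neumann series for $(\lambda_1 I-A)^{-1}$---is a legitimate alternative to the paper's route, which instead follows the original F\"uredi--Koml\'os \emph{von Mises iteration}: write $\bone=\bv+\br$ with $Y\bv=\lambda_1\bv$, $\bv\perp\br$, derive the algebraic identity
\[
    \lambda_1=\frac{\bS^\top\bS}{\bS^\top\bone}+\frac{\lambda_1\br^\top Y\br-\|Y\br\|^2}{\bS^\top\bone}
    \qquad (\bS=Y\bone),
\]
and then control $\|\br\|$, $\|Y\br\|$, and the row-sum statistics $\bS^\top\bone$, $\|\bS-L\bone\|^2$ via the moment estimates of Lemma~\ref{lem:Zi}. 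Both approaches ultimately rest on the same second-moment computations for $\sum_j X_{ij}$ and $\sum_i(\sum_j X_{ij})^2$; the paper's version avoids inverting $\lambda_1 I-A$ and so sidesteps the need to first establish a spectral gap, at the cost of a slightly more ad hoc identity.

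That said, your displayed secular equation is wrong. From $v=\lambda\langle u_0,v\rangle(\lambda_1 I-A)^{-1}u_0$, taking the inner product with $u_0$ gives $1=\lambda\,u_0^\top(\lambda_1 I-A)^{-1}u_0$, i.e.
\[
    \lambda_1=\lambda\sum_{k\ge 0}\frac{u_0^\top A^k u_0}{\lambda_1^{k}}
    =\lambda+\frac{\lambda}{\lambda_1}u_0^\top Au_0+\frac{\lambda}{\lambda_1^2}u_0^\top A^2u_0+\cdots,
\]
not $\lambda_1=\lambda+\lambda^2\sum_{k\ge 0}u_0^\top A^ku_0/\lambda_1^{k+1}$ (your version would force $\lambda_1\approx 2\lambda$ already at $k=0$). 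This error propagates through your term-by-term analysis: the $1/\lambda$ in the centering does \emph{not} come from the $k=0$ term but from $k=2$, namely $\frac{\lambda}{\lambda_1^2}u_0^\top A^2u_0\approx\frac{1}{\lambda}$; and your tail bound $O(1/\lambda)$ is too weak by one power of $\lambda$---with the correct coefficients the $k\ge 3$ tail is $\frac{\lambda}{\lambda_1^3}\|A\|_{\op}^3(1-\|A\|_{\op}/\lambda_1)^{-1}=O_P(1/\lambda^2)$, which is what you need to land in the $O_P(\sqrt n/\lambda^2)$ slot after multiplying by $\sqrt n$. Once the equation is corrected, the rest of your outline (in particular the variance computation for $u_0^\top A^2u_0$, which is exactly the paper's Lemma~\ref{lem:S-minus-L-squared} in disguise) goes through and in fact yields a slightly sharper exponent on the middle error term than the statement claims.
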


\begin{corollary}\label{cor:fluc}
    Consider the matrix $Y_n$ and let $Z$ be a standard Gaussian.
    
    (a) When $\varepsilon \ge 1$ and $\lambda \gg n^{1/4}$,
    \[
        \sqrt{n} \bigg[\lambda_1(n^{-1/2} Y_n) - \bigg(\lambda + \frac{1}{\lambda}\bigg)\bigg] \xrightarrow{d} \sqrt{2}Z.
    \]

    (b) When $0 < \varepsilon < 1$, we have $\Var[\frac{1}{n}\sum_{ij} X_{ij}] = O(n^{1 - \varepsilon})$. Assuming that  $\Var[\frac{1}{n}\sum_{ij} X_{ij}] = \sigma^2 n^{1 - \varepsilon}(1 + o(1))$, if $\lambda \gg n^{\varepsilon / 4}$, then
    \[
        n^{\varepsilon/2} \bigg[\lambda_1(n^{-1/2} Y_n) - \bigg(\lambda + \frac{1}{\lambda}\bigg)\bigg] \xrightarrow{d} \sigma Z.
    \]
\end{corollary}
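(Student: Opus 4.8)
The plan is to read off both parts directly from the representation in Theorem~\ref{thm:fluc}. The decisive observation is that the leading term $S_n := \tfrac{1}{n}\sum_{i,j} X_{ij}$, being a \emph{fixed} linear functional of a jointly Gaussian family, is itself an exactly centered Gaussian random variable; thus no central limit theorem is needed, and the whole argument reduces to (i) computing $\Var(S_n)$ and (ii) verifying that the two $O_P$ remainders in Theorem~\ref{thm:fluc} are negligible after the appropriate rescaling --- step (ii) being precisely what forces the stated ranges of $\lambda$.

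For (i), expand $\Var(S_n) = n^{-2}\sum_{(i,j),(i',j')} \bbE[X_{ij}X_{i'j'}]$, a sum over $n^4$ ordered quadruples, and split it according to whether $\{i,j\} = \{i',j'\}$ as sets. There are exactly $2n^2 - n$ quadruples of the first kind --- the $n^2$ with $(i,j)=(i',j')$ and the $n^2-n$ with $i\ne j$ and $(i',j')=(j,i)$ --- each contributing $\bbE[X_{ij}^2]=1$; the remaining $\le n^4$ quadruples each contribute $O(n^{-(1+\varepsilon)})$ by \eqref{eq:correlation}. Hence
\[
    \Var(S_n) \;=\; \frac{2n^2 - n + O\big(n^{4-(1+\varepsilon)}\big)}{n^2} \;=\; 2 + O(n^{1-\varepsilon}).
\]
In regime (a), $\varepsilon \ge 1$ makes this $2 + o(1)$ (for $\varepsilon = 1$ the correction is only $O(1)$, and the limiting variance equals $2$ exactly when the off-diagonal covariances sum to $o(n^2)$), so $S_n \convd \sqrt{2}\,Z$. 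In regime (b), $\varepsilon < 1$ gives $\Var(S_n) = O(n^{1-\varepsilon})$, and under the assumed asymptotics $\Var(S_n)=\sigma^2 n^{1-\varepsilon}(1+o(1))$ the rescaled variable $n^{(\varepsilon-1)/2}S_n$ is centered Gaussian with variance $\sigma^2(1+o(1))$, so $n^{(\varepsilon-1)/2}S_n \convd \sigma Z$.

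For (ii), regime (a) has $\min\{\varepsilon,1\}=1$, so Theorem~\ref{thm:fluc} reads $\sqrt{n}[\lambda_1(n^{-1/2}Y_n) - (\lambda+\tfrac1\lambda)] = S_n + \tfrac{1}{\lambda}O_P(1) + O_P(\sqrt n/\lambda^2)$: since $\lambda \gg n^{1/4}\to\infty$ the middle term is $o_P(1)$, and $\sqrt n/\lambda^2 \to 0$ is equivalent to $\lambda\gg n^{1/4}$, killing the last term. In regime (b) multiply the display of Theorem~\ref{thm:fluc} by $n^{(\varepsilon-1)/2}$ and use $\min\{\varepsilon,1\}=\varepsilon$; the $n$-exponents in the first remainder cancel, giving
\[
    n^{\varepsilon/2}\Big[\lambda_1(n^{-1/2}Y_n) - \big(\lambda+\tfrac1\lambda\big)\Big] \;=\; n^{(\varepsilon-1)/2}S_n \;+\; \frac{1}{\lambda}O_P(1) \;+\; \frac{n^{\varepsilon/2}}{\lambda^2}O_P(1),
\]
and $\lambda \gg n^{\varepsilon/4}\to\infty$ kills the first remainder while $\lambda\gg n^{\varepsilon/4}\iff n^{\varepsilon/2}/\lambda^2\to0$ kills the second. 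In each regime Slutsky's theorem glues the Gaussian leading term to the $o_P(1)$ remainder, yielding the claim.

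I expect no real obstacle beyond bookkeeping: the two points requiring care are the exact count of the ``covariance-one'' quadruples in (i) (which is what certifies the constant $2$, hence the $\sqrt 2$) and the exponent arithmetic in (ii), which is exactly what produces the thresholds $\lambda\gg n^{1/4}$ and $\lambda\gg n^{\varepsilon/4}$ together with the two distinct normalisations $n^{1/2}$ and $n^{\varepsilon/2}$ in the two regimes; all the analytic work is already contained in Theorem~\ref{thm:fluc}.
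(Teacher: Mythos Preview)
Your proposal is correct and follows essentially the same route as the paper: both invoke Theorem~\ref{thm:fluc}, compute $\Var(S_n)$ by splitting into diagonal and off-diagonal covariance terms, use the exact Gaussianity of $S_n$ to bypass any CLT, and then check that the two $O_P$ remainders vanish under the stated growth conditions on $\lambda$. Your variance bookkeeping is in fact slightly cleaner than the paper's (which writes the answer as an odd $\max$), and your parenthetical about the $\varepsilon=1$ boundary case---where the off-diagonal contribution is $O(1)$ rather than $o(1)$---is a genuine subtlety that the paper glosses over.
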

Noteworthy here is the phenomenon that different scalings are required in the two regimes $\varepsilon \ge 1$ and $0 < \varepsilon < 1$.

The rest of the paper is organised as follows. Section~\ref{sec:prelim} sets up the combinatorial machinery needed to execute the high-moment analysis. In Section~\ref{sec:proofs}, we then give the details of our proofs. Appendix~\ref{sec:misc-proofs} collects the proofs of some auxiliary results.

\section{Preliminaries}\label{sec:prelim}
The proof Theorem~\ref{thm:largest_ev} is based on a combinatorial analysis of traces of high powers of the matrix $X_{n}$ and is motivated by the arguments of F\"{u}redi-Koml\'{o}s.

We have for any $k$,
\begin{equation}\label{eq:traceexpression}
    \Tr[(n^{-1/2}X_{n})^{k}] = \frac{1}{n^{k/2}}\sum_{i_{1}, i_{2}, \ldots, i_{k}} X_{i_{1}i_{2}}\ldots X_{i_{k}, i_{1}}.
\end{equation}  
We shall analyse the contributions from the tuples of indices $(i_{1}, \ldots, i_{k}, i_{1})$ systematically by careful combinatorial arguments. For this, we shall follow the notations and terminologies given in \cite{anderson2010introduction} and \cite{anderson2006clt}.

\subsection{Words, sentences and their equivalence classes}
\label{subsec:word}
\begin{definition}[$\mathcal{S}$ words]
Given a set $\mathcal{S}$, an $\mathcal{S}$ letter $s$ is simply an element of $\mathcal{S}$. An $\mathcal{S}$ word $w$ is a finite sequence of letters $s_1 \cdots s_k$, at least one letter long. An $\mathcal{S}$ word $w$ is \emph{closed} if its first and last letters are the same. In this paper, $\mathcal{S}=\{1,\ldots,n\}$.
\end{definition}
Two $\mathcal{S}$ words $w_1,w_2$ are called \emph{equivalent}, denoted $w_1\sim w_2$, if there is a bijection on $\mathcal{S}$ that maps one into the other. For any word $w = s_1 \cdots s_k$, we use $l(w) = k$ to denote its \emph{length}. We define the \emph{weight} $\wt(w)$ as the number of distinct elements of the set $\{s_1, \ldots, s_k \}$ and the \emph{support} of $w$, denoted by $\supp(w)$, as the set of letters appearing in $w$. With any word $w$, we may associate an undirected graph, with $\wt(w)$ vertices and at most $l(w) - 1$ edges, as follows.
\begin{definition}[Graph associated with a word]\label{def:graphword}
Given a word $w = s_1 \cdots s_k$, we let $G_w = (V_w, E_w)$ be the graph with set of vertices $V_w = \supp(w)$ and (undirected) edges $E_w = \{\{s_i, s_{i+1} \}, i = 1, \ldots, k - 1 \}.$
\end{definition}
The graph $G_w$ is connected since the word $w$ defines a path connecting all the
vertices of $G_w$, which further starts and terminates at the same vertex if the word
is \emph{closed}.  We note that equivalent words generate the same graphs $G_w$ (up to graph isomorphism) and the same passage-counts of the edges.

\begin{definition}[Sentences and corresponding graphs]\label{def:sentence}
A sentence $a = [w_i]_{i=1}^{m} = [[s_{i,j}]_{j = 1}^{l(w_i)}]_{i = 1}^{m}$ is an ordered collection of $m$ words of lengths $l(w_1), \ldots, l(w_m)$, respectively. We define $\supp(a) := \cup_{i = 1}^m \supp(w_i)$ and $\wt(a) := |\supp(a)|$. We set $G_a = (V_a, E_a)$ to be the graph with
\[
    V_a = \supp(a),\quad E_a = \left\{\{s_{i, j}, s_{i, j + 1}\} \mid j = 1, \ldots, l(w_i) - 1; \, i = 1,\ldots, m\right\}.
\]
\end{definition} 
\subsection{The \FK ~ encoding and bounds}
We now introduce the notion of {\FK} sentences (abbrv. FK sentences). The original idea of {\FK} sentences dates back to \cite{furedi1981eigenvalues}. They can be used to bound the number of words of length $k$. Such bounds are particularly important for proving that the largest eigenvalue of a Wigner matrix converges to $2$. They turn out to be useful in our setting as well.

\begin{definition}[FK sentences]
Let $a = [w_{i}]_{i = 1}^{m}$ be a sentence consisting of $m$ words. We say that $a$ is an \emph{FK sentence} if the following conditions hold:
\begin{enumerate}
    \item $G_a$ is a tree;
    \item jointly the words/walks $w_i$, $i = 1, \ldots, m$, visit no edge of $G_a$ more than twice.
    \item For $i = 1, \ldots, m - 1$, the first letter of $w_{i + 1}$ belongs to $\cup_{j = 1}^{i} \supp(w_j)$.
\end{enumerate}
We say that $a$ is an \emph{FK word} if $m = 1$.
\end{definition}

By definition, any word admitting an interpretation as a walk in a forest visiting no edge of the forest more than twice is automatically an FK word. The constituent words of an FK sentence are FK words. If an FK sentence is at least two words long, then the result of dropping the last word is again an FK sentence. If the last word of an FK sentence is at least two letters long, then the result of dropping the last letter of the last word is again an FK sentence.

\begin{definition}[The stem of an FK sentence]\label{def:stem_FK_sentence}
Given an FK sentence $a = [w_{i}]_{i = 1}^{m}$, we define $G_{a}^{1} = (V_{a}^{1}, E_{a}^{1})$ to be the subgraph of $G_a = (V_a, E_a)$ with $V^{1}_{a} = V_a$ and $E_{a}^{1}$ equal to the set of edges $e \in E_{a}$ such that the words/walks $w_i$, $i = 1,\ldots, m$, jointly visit $e$ exactly once.
\end{definition}

The following lemma characterises the exact structure of an FK word.

\begin{lemma}[Lemma 2.1.24 in \cite{anderson2010introduction}]\label{lem:FKembed}
Suppose $w$ is an FK word. Then there is exactly one way to write $w = w_1 \cdots w_r$, where each $w_i$ is a Wigner word and they are pairwise disjoint. 
\end{lemma}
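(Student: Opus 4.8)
The plan is to establish existence and uniqueness of the decomposition simultaneously, by strong induction on the length $l(w)$. If $l(w) = 1$, then $w$ is a single letter, which is a (trivial) Wigner word, and this is manifestly the only way to write it; this is the base case.

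For the induction step write $w = s_1 \cdots s_k$ with $k \ge 2$ and let $p^\ast := \max\{i : s_i = s_1\}$ be the position of the last occurrence of the first letter. Two structural observations carry the argument. First, \emph{every closed sub-walk of $w$ is a Wigner word}: its graph is a connected subgraph of the tree $G_w$, hence itself a tree; being a closed walk on a tree, it traverses each of its edges an even number of times (removing an edge splits the tree into two components, and a closed walk starting and ending in the same component must cross that edge an even number of times); since $w$, and a fortiori the sub-walk, traverses no edge more than twice, each of its edges is traversed exactly twice, which is precisely the definition of a Wigner word. In particular the prefix $u := s_1 \cdots s_{p^\ast}$, which is closed because $s_{p^\ast} = s_1$, is a Wigner word. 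Second, when $p^\ast < k$ the edge $e^\ast := \{s_{p^\ast}, s_{p^\ast + 1}\}$ is traversed \emph{exactly once} by $w$: it is traversed at the step $p^\ast \to p^\ast + 1$; any traversal inside $u$ would have to occur an even number of times (closed walk on a tree), forcing at least three traversals in total; and any traversal inside the suffix $v' := s_{p^\ast+1}\cdots s_k$ would place the letter $s_1$ at a position exceeding $p^\ast$, contradicting maximality. Hence $e^\ast$ is a bridge of $G_w$, the walk $u$ lives entirely in the component of $G_w \setminus e^\ast$ containing $s_1$, the walk $v'$ lives entirely in the other component, and therefore $\supp(u) \cap \supp(v') = \emptyset$.

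Given these, the induction step is short. If $p^\ast = k$, then $w = u$ is a Wigner word and the decomposition is $w = w_1$ with $r = 1$. If $p^\ast < k$, then $v'$ is again an FK word — a walk on a subtree traversing no edge more than twice — of length $k - p^\ast < k$, so by the inductive hypothesis it has a unique decomposition $v' = w_2 \cdots w_r$ into Wigner words with pairwise disjoint supports; since $\supp(u)$ is disjoint from $\supp(v') = \bigcup_{j \ge 2} \supp(w_j)$, setting $w_1 := u$ exhibits the desired decomposition $w = w_1 w_2 \cdots w_r$. For uniqueness, observe that in any decomposition $w = w_1 \cdots w_r$ the word $w_1$ is a Wigner word, hence closed, hence a prefix of $w$ ending in the letter $s_1$; disjointness of supports forces $s_1 \notin \supp(w_j)$ for all $j \ge 2$, so $s_1$ does not occur beyond the end of $w_1$, whence $w_1$ extends at least to position $p^\ast$; as the last letter of $w_1$ is itself an occurrence of $s_1$, its position must equal $p^\ast$. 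Thus $w_1 = u$ is forced, and then $w_2 \cdots w_r = v'$, whose decomposition is unique by induction (when $p^\ast = k$, $v'$ is empty and $r = 1$).

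The main obstacle is the pair of structural facts in the second paragraph: that a closed sub-walk of an FK word is automatically a Wigner word, and that the edge immediately following the last occurrence of the initial letter is a genuine bridge crossed exactly once. Both hinge on the elementary parity principle that a closed walk on a tree crosses each edge an even number of times, used in tandem with the maximality built into the definition of $p^\ast$; once these are available, the induction and the uniqueness bookkeeping are entirely routine.
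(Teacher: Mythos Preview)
The paper does not prove this lemma; it is quoted verbatim as Lemma~2.1.24 of \cite{anderson2010introduction} and used as a black box (indeed, the paper does not even recall the definition of a Wigner word). So there is no ``paper's own proof'' to compare against.

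That said, your argument is correct and is essentially the standard one. The two structural facts you isolate are exactly what is needed: (a) any closed contiguous sub-walk of an FK word is a Wigner word, because a closed walk on a tree crosses each edge an even number of times and the ambient FK bound caps this at two; and (b) the step immediately after the last occurrence $p^\ast$ of the initial letter crosses a fresh edge $e^\ast$ exactly once, so that removing $e^\ast$ from the tree $G_w$ separates the prefix $u = s_1\cdots s_{p^\ast}$ from the suffix $v' = s_{p^\ast+1}\cdots s_k$ with disjoint supports. Your verification of~(b) is clean: a traversal of $e^\ast$ inside the closed walk $u$ would force an even (hence $\ge 2$) count there, giving $\ge 3$ total; a traversal inside $v'$ would produce the letter $s_1$ past position $p^\ast$, contradicting maximality. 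The uniqueness bookkeeping --- that disjointness pins the end of $w_1$ at exactly $p^\ast$ --- is also correctly argued.

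One cosmetic point: since the paper never defines ``Wigner word'', you are implicitly using the AGZ definition (a closed word whose graph is a tree with every edge traversed exactly twice, together with the degenerate single-letter case). It would be worth stating this explicitly at the top of your proof so that the phrase ``which is precisely the definition of a Wigner word'' is self-contained.
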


In the setting of Lemma \ref{lem:FKembed}, let $s_{i}$ be the first letter of the word $w_i$. We declare the word $s_{1} \cdots s_{r}$ to be the \emph{acronym} of the word $w$.

\textbf{FK syllabification.}
Our interest in FK sentences is mainly due to the fact that any word $w$ can be parsed into an FK sentence sequentially. In particular, one declares a new word at each time when not doing so would prevent the sentence formed up to that point from being an FK sentence. Formally, we define the FK sentence $w'$ corresponding to any given word $w$ in the following way. Suppose that $w = s_1 \cdots s_m$. We declare any edge $e \in E_{w}$ to be \emph{new} if $e = \{s_{i}, s_{i + 1}\}$ and $s_{i + 1} \notin \{s_1, \ldots, s_{i}\}$; otherwise, we declare $e$ to be \emph{old}. We now construct the FK sentence $w'$ corresponding to the word $w$ by breaking the word at each position of an old edge and the third and all subsequent positions of a new edge. Observe that any old edge gives rise to a cycle in $G_{w}$. As a consequence, by breaking the word at the old edge we remove all the cycles in $G_w$. On the other hand, all new edges are traversed at most twice as we break at their third and all subsequent occurrences. It is easy to see that the graph $G_{w'}$ remains connected since we are not deleting the first occurrence of a new edge. As a consequence, the graph $G_{w'}$ is a tree where every edge is traversed at most twice. Furthermore, by the definition of old and new edges, the first letter in the second and any subsequent word in $w'$ belongs to the support of all the previous ones. Therefore, the resulting sentence $w'$ is an FK sentence. Note that this \emph{FK syllabification} preserves equivalence, i.e. if $w \sim x$, then the corresponding FK sentences $w'\sim x'$.

The discussion about FK syllabification shows that all words can be uniquely parsed into an FK sentence. Hence we can use the enumeration of FK sentences to enumerate words of specific structures of interest.

The following two lemmas together give an upper bound on the number of equivalence classes corresponding to closed words via the corresponding FK sentences.
\begin{lemma}[Lemma 7.7 in \cite{anderson2006clt}]\label{lem:wordbyFK}
Let $\Gamma(k, l, m)$ denote the set of equivalence classes of FK sentences $a = [w_i]_{i = 1}^{m}$ consisting of $m$ words such that $\sum_{i = 1}^{m} l(w_i) = l$ and $\wt(a) = k$. Then
\begin{equation}\label{wordbyFK}
    \#\Gamma(k, l, m) \le 2^{l - m} \binom{l - 1}{m - 1} k^{2(m - 1)}.
\end{equation}
\end{lemma}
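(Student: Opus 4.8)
The plan is to encode each equivalence class in $\Gamma(k,l,m)$ by a small amount of combinatorial data and to bound the number of possible encodings. First I would stratify $\Gamma(k,l,m)$ by the vector of word lengths $(l(w_1),\dots,l(w_m))$: these are positive integers summing to $l$, so there are $\binom{l-1}{m-1}$ of them, and it suffices to bound the number of equivalence classes with a prescribed word-length vector by $2^{l-m}k^{2(m-1)}$. Within each class I would fix the canonical representative obtained by scanning the sentence left to right (all of $w_1$, then all of $w_2$, and so on) and labelling the $t$-th distinct letter encountered by $t$; this puts equivalence classes in bijection with such canonically labelled sentences, which is what one then counts.

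The counting proceeds by walking through the canonical sentence and recording the minimum information needed to recover each position. The start of $w_1$ is forced to be $1$. For $i \ge 2$, the start of $w_i$ lies, by FK condition (3), in $\bigcup_{j<i}\supp(w_j)$, hence is one of at most $k$ already-seen labels; over $w_2,\dots,w_m$ this costs a factor $\le k^{m-1}$. The remaining $\sum_i (l(w_i)-1) = l-m$ positions are the internal steps, i.e. transitions from the $j$-th to the $(j+1)$-th letter inside a word. For each internal step I would first record one bit: whether it moves to a brand-new vertex (a ``discovery'') or to an already-visited one. This costs $2^{l-m}$ in total. A discovery needs no further data (the new vertex receives the next unused label), and since $G_a$ is a tree and condition (3) forbids a word from \emph{starting} at a new vertex, every non-root vertex is discovered at exactly one internal step; hence there are precisely $k-1$ discoveries and $l-m-(k-1)$ old steps.

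The heart of the argument is showing that the old steps collectively cost only an additional factor $k^{m-1}$. Here I would use the structure theorem, Lemma~\ref{lem:FKembed}: each word is uniquely a concatenation of pairwise disjoint Wigner subwords, and inside such a subword (just as for an ordinary Wigner word) an old step is forced --- it backtracks along the edge by which the current vertex was most recently entered --- so it carries no recorded data. The old steps that are \emph{not} forced backtracks are those where a fresh walk is resumed at a vertex with edges left half-traversed by earlier words; one must show these are controlled by the $m-1$ word boundaries $w_2,\dots,w_m$ and that each is pinned down by a choice among $\le k$ labels, giving the final factor $\le k^{m-1}$. Multiplying the contributions yields $\binom{l-1}{m-1}\cdot k^{m-1}\cdot 2^{l-m}\cdot k^{m-1} = 2^{l-m}\binom{l-1}{m-1}k^{2(m-1)}$, as required.

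The main obstacle --- essentially the only place requiring genuine care --- is precisely this last point: making rigorous why the old steps whose target is not determined by the walk's own recent history can be matched injectively to the $m-1$ word resumptions and recorded with a single $\le k$ choice apiece. The subtlety is that an FK sentence may abandon a partially explored subtree and return to it much later, possibly in a later word, so one must verify that every such return is bookmarked by a word-start and introduces no new ambiguity. Granted Lemma~\ref{lem:FKembed} and the FK syllabification machinery already in place, the remaining bookkeeping is routine.
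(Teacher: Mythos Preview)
The paper does not prove this lemma; it is quoted verbatim from \cite{anderson2006clt} (as Lemma~7.7 there) and used as a black box in the proof of Theorem~\ref{thm:largest_ev}. So there is no ``paper's own proof'' to compare against.

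That said, your outline is the standard \FK\ encoding argument and the first three ingredients --- the $\binom{l-1}{m-1}$ compositions, the $2^{l-m}$ new/old bits, and the $k^{m-1}$ word-start labels --- are set up correctly. You also correctly locate the only nontrivial step: controlling the old steps that are \emph{not} forced backtracks. Your use of Lemma~\ref{lem:FKembed} here is exactly right: within a Wigner subword every old step is a backtrack, so the only undetermined old steps occur at transitions between Wigner subwords, and then only when the next subword's first letter lies in the support of an earlier word.

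Where your sketch is thin is the assertion that these undetermined transitions can be ``matched injectively to the $m-1$ word resumptions.'' This is the right target but it is not bookkeeping: one needs to argue, for instance, that the graph of once-traversed edges built up after $w_1,\dots,w_{i-1}$ has a controlled branching structure (each $w_j$ contributes only a path of acronym edges), and that walking on it in $w_i$ can generate at most one ambiguity per earlier word that created a branch. The total count of ambiguous steps over the whole sentence does come out $\le m-1$, but establishing this is precisely the content of the lemma, not a triviality to be waved through. Your framing is sound and the tools you cite suffice; just be aware that this last step carries essentially all of the work.
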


\begin{lemma}[Lemma 7.8 in \cite{anderson2006clt}]\label{lem:mbdd}
For any FK sentence $a = [w_i]_{i = 1}^{m}$, we have 
\begin{equation}\label{mval}
    m =\#E_{a}^{1} - 2\wt(a) + 2 + \sum_{i=1}^{m} l(w_i).
\end{equation}
\end{lemma}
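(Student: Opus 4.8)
The plan is to derive \eqref{mval} from two elementary facts about an FK sentence $a = [w_i]_{i = 1}^m$: the tree structure of $G_a$, and a double count of edge traversals. First I would record what the definitions give for free. By FK condition (1), $G_a = (V_a, E_a)$ is a tree, so $\#E_a = \wt(a) - 1$. By the definition of $E_a$ in Definition~\ref{def:sentence}, every edge of $G_a$ is one of the consecutive pairs $\{s_{i,j}, s_{i,j+1}\}$ occurring in some word, hence is traversed at least once; by FK condition (2) it is traversed at most twice. Thus, writing $E_a^1$ as in Definition~\ref{def:stem_FK_sentence} for the edges traversed exactly once and $E_a^2 := E_a \setminus E_a^1$ for those traversed exactly twice, we obtain the partition $\#E_a^1 + \#E_a^2 = \wt(a) - 1$.

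Next I would count, in two ways, the total number $T$ of edge traversals performed jointly by the walks $w_1, \dots, w_m$, counted with multiplicity. Summing over words: the walk $w_i = s_{i,1} \cdots s_{i,l(w_i)}$ uses the $l(w_i) - 1$ pairs $\{s_{i,j}, s_{i,j+1}\}$, so $T = \sum_{i=1}^m (l(w_i) - 1) = \sum_{i=1}^m l(w_i) - m$. Summing over edges instead, and using the partition above, $T = 1 \cdot \#E_a^1 + 2 \cdot \#E_a^2$. Equating the two expressions for $T$ and substituting $\#E_a^2 = \wt(a) - 1 - \#E_a^1$ yields
\[
    \sum_{i=1}^m l(w_i) - m = \#E_a^1 + 2\bigl(\wt(a) - 1 - \#E_a^1\bigr) = 2\wt(a) - 2 - \#E_a^1,
\]
which is precisely \eqref{mval} after rearranging.

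There is essentially no obstacle here; the argument is a one-line Euler-type identity. The only points needing care are that the tree hypothesis (FK condition (1)) is what licenses $\#E_a = \wt(a) - 1$, and that the "at least once, at most twice" dichotomy --- which makes $E_a = E_a^1 \sqcup E_a^2$ a genuine two-block partition --- relies on both the definition of $E_a$ and FK condition (2). One should also note that the degenerate case $l(w_i) = 1$, where the word contributes $0$ traversals, is consistent with the bookkeeping.
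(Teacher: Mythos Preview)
Your argument is correct. The paper does not supply its own proof of this lemma; it is quoted verbatim from \cite{anderson2006clt}, and your double-counting of edge traversals combined with the tree identity $\#E_a = \wt(a) - 1$ is exactly the standard proof found there.
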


We will also need Wick's formula for calculating joint moments of correlated Gaussians. For $k \in \bbN$, let $\cP_2(k)$ be the set of all pair-partitions of the set $\{1,2, \ldots, k\}$. 
\begin{lemma}[Wick's formula]\label{lem:Wick's_formula}
   Let $(X_1, X_2, \ldots, X_k)$ be a centered multivariate Gaussian random vector. Then
   \[
       \bbE[X_1 X_2 \cdots X_k] = \sum_{\pi \in \cP_2(k)} \prod_{\{i,j\} \in \pi} \bbE[X_i X_j].
   \]
\end{lemma}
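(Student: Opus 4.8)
The plan is to prove Wick's formula by extracting a mixed partial derivative of the moment generating function. Let $\bSigma = (\Sigma_{ab})$ with $\Sigma_{ab} = \bbE[X_a X_b]$ denote the covariance matrix of the centered Gaussian vector $(X_1, \ldots, X_k)$ (the $X_i$ need not be distinct coordinates; all that matters is that the $k$-tuple is jointly Gaussian with some covariance $\bSigma$). The starting point is the identity $M(t) := \bbE[\exp(\sum_{i = 1}^k t_i X_i)] = \exp(\tfrac12 t^\top \bSigma t)$ for all $t \in \bbR^k$. Since $M$ is entire, one may differentiate under the expectation, so that $\bbE[X_1 X_2 \cdots X_k] = \partial_{t_1} \partial_{t_2} \cdots \partial_{t_k} M(t)\big|_{t = 0}$, i.e. the coefficient of the monomial $t_1 t_2 \cdots t_k$ in the Taylor expansion of $M$ at the origin.

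Next I would expand $M(t) = \sum_{n \ge 0} \frac{1}{n!}\big(\tfrac12 \sum_{a, b} \Sigma_{ab} t_a t_b\big)^n$ and collect the coefficient of $t_1 \cdots t_k$. Each factor is a homogeneous quadratic, so only $n = k/2$ contributes; in particular, if $k$ is odd the moment vanishes, matching $\cP_2(k) = \varnothing$. For $k$ even, extracting the monomial $t_1 \cdots t_k$ from the $(k/2)$-fold product amounts to choosing, for each of the $k/2$ factors, an ordered pair of labels so that every element of $\{1, \ldots, k\}$ is used exactly once. Grouping these choices by the underlying unordered pair-partition $\pi$, each $\pi$ is counted with multiplicity $(k/2)! \cdot 2^{k/2}$ (orderings of the pairs among the factors, and of the two entries within each pair), contributing $(k/2)!\,2^{k/2}\prod_{\{i, j\} \in \pi}\Sigma_{ij}$. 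Multiplying by the prefactor $\frac{1}{(k/2)!}\,2^{-k/2}$ from the exponential series, the combinatorial factors cancel exactly, leaving $\sum_{\pi \in \cP_2(k)} \prod_{\{i, j\} \in \pi} \bbE[X_i X_j]$.

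The only real obstacle is bookkeeping: tracking the ordering/multinomial factors so that the cancellation is manifestly exact, and noting that repeated labels among the $X_i$ cause no difficulty. A cleaner route that sidesteps this is an induction via Gaussian integration by parts: first establish $\bbE[X_1\, g(X_2, \ldots, X_k)] = \sum_{j = 2}^k \bbE[X_1 X_j]\,\bbE[\partial_j g(X_2, \ldots, X_k)]$ for smooth $g$ of polynomial growth (a one-line consequence of the formula for $M$, or of conditioning $X_1$ on $(X_2, \ldots, X_k)$), apply it with $g(x_2, \ldots, x_k) = x_2 x_3 \cdots x_k$ to get $\bbE[X_1 \cdots X_k] = \sum_{j = 2}^k \bbE[X_1 X_j]\,\bbE\big[\prod_{\ell \notin \{1, j\}} X_\ell\big]$, and then induct on $k$: the term indexed by $j$ produces precisely those pair-partitions in which $1$ is matched with $j$, so summing over $j$ and invoking the inductive hypothesis on the remaining $k - 2$ factors yields the full sum over $\cP_2(k)$, with base cases $k = 0$ (empty product equals $1$) and $k = 1$ ($\bbE[X_1] = 0$). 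I would present this inductive argument as the main proof and relegate the generating-function computation to a remark.
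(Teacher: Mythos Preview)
Your proposal is correct; both the generating-function computation and the induction via Gaussian integration by parts are standard and valid proofs of Wick's formula. Note, however, that the paper does not actually prove this lemma: it is stated in the preliminaries as a classical fact and used as a black box, so there is no ``paper's own proof'' to compare against.
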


\section{Proofs}\label{sec:proofs}
\subsection{Proof of Theorem~\ref{thm:largest_ev}}
\begin{proof}[Proof of Theorem~\ref{thm:largest_ev}]
From Corollary 2.7 (ii) of \cite{catalano2024random} it follows that the empirical spectral distribution of $n^{-1/2}X_n$ converges weakly almost surely to semicircle law. Hence we have
\[
    \liminf_{n \to \infty} \lambda_1(n^{-1/2}X_n) \ge 2 \text{ a.s.}
\]
Let $\delta = 2 + \eta$ for some $\eta >0$ and $k \in \bbN$. For brevity, write $\lambda_{1, n} = \lambda_1(n^{-1/2} X_n)$. By Markov's inequality, we have
\begin{align*}
    \bbP( \lambda_{1,n} > \delta) &= \bbP( \lambda^{2k}_{1,n} > \delta^{2k} )\\
                                  & \le \frac{\bbE[\lambda^{2k}_{1,n} ]}{\delta^{2k}} \le \frac{\bbE\Tr[(n^{-1/2}X_n)^{2k}]}{\delta^{2k}}.
\end{align*}
We have for any $k$,
\begin{align*}
\bbE\Tr[(n^{-1/2}X_n)^{2k}] &= \frac{1}{n^k}\sum_{i_{1},i_{2},\ldots ,i_{2k}} \bbE[X_{i_{1}i_{2}}\ldots X_{i_{2k},i_{1}}] \\
&= \frac{1}{n^k} \sum_{t = 1}^{2k} \sum_{\substack{i_{1},i_{2},\ldots ,i_{2k} \\ |\{i_1, i_2 \ldots, i_{2k}\}|= t}} \bbE[X_{i_{1}i_{2}}\ldots X_{i_{2k},i_{1}}]. \\
\end{align*}
Let $(i_1, i_2, \ldots, i_{2k})$ be a particular configuration of indices in the above sum. We consider the corresponding closed word $w = i_1 i_2 \cdots i_{2k} i_{1}$ which is then parsed into an FK sentence $a = [w_i]_{i = 1}^{m}$ with $\wt(a) = t$ and total length $\sum_{i = 1}^{m} l(w_i) = 2k + 1$. There can be many FK sentences with the same weight $t$ and total length $2k + 1$. We need an estimate of $ \#\Gamma(t, 2k + 1, m)$. From Lemma~\ref{lem:wordbyFK}, we have 
\begin{align}\label{eq:upper_bound_gamma}
    \#\Gamma(t, 2k + 1, m) \le 2^{2k + 1 - m} \binom{2k}{m - 1} t^{2(m - 1)}.
\end{align}
Additionally, we need to select $t$ distinct letters from the set $\{1, 2, \ldots, n\}$, which can be done in $O(n^t)$ ways. Consider the graph $G_a = (V_a, E_a)$ associated with the sentence $a$. Let $E_1 = \# E_{a}^1$, where $E_a^1$ is as in Definition~\ref{def:stem_FK_sentence}. Then from Lemma~\ref{lem:mbdd} we have
\begin{align}\label{eq:est_m}
m = E_1 - 2t + 2 + (2k + 1).
\end{align}
Using the fact that $t \le 2k$ and the relation \eqref{eq:est_m}, the upper bound in \eqref{eq:upper_bound_gamma} reduces to 
\begin{align*}
   2^{2k + 1 - m} \binom{2k}{m - 1}t^{2(m - 1)} & \le 2^{2k} \frac{(2k)^{m - 1}}{(m - 1)!} (2k)^{2(m - 1)} \\
   & \le 2^{2k} (2k)^{3(m - 1)} =  2^{2k} (2k)^{(6k - 6t + 3E_1 + 6)} . 
\end{align*}

Let $E_{a}^{2}$ be the set of edges $e \in E_a$ such that the words/walks $w_i, i=1,\ldots, m$, jointly visit $e$ exactly twice and let $E_{a}^{3}$ be the set of edges which are traversed by the words/walks thrice or more. Define $ E_i := \# E_{a}^{i}, i = 2,3$. Then it is easy to observe that
\[
    2k \ge E_1 + 2E_2 + 3E_3 \ \ \text{and} \ \ t \le E_1 + E_2 + E_3,
\]
which together imply that
\begin{align}\label{eq:upper_bound_E3}
     k - t + \frac{E_1}{2} \ge \frac{E_3}{2}. 
\end{align}

To calculate the expectation corresponding to an FK sentence, we employ Wick's formula. This requires us to keep track of which entries in the matrix $X_n$ are paired with each other. Observe that an entry $X_{i_{j - 1}, i_j }$ in the expectation corresponds to the edge $\{i_{j - 1}, i_j\}$ in the graph $G_a$. We say that two edges $\{i_{j_1 - 1}, i_{j_1}\}$  and $\{i_{j_2 - 1}, i_{j_2}\}$ of $G_a$ ``match" with each other if there is a pair partition $\pi$ of the set $\{1,2, \ldots, 2k\}$ such that $\{j_1 - 1, j_{2} - 1 \}$ is a block of $\pi$, where $ 2 \le j_1, j_2 \le 2k + 1$ with $i_{2k + 1} = i_1$.  Matchings can happen in one of the following ways:
\begin{enumerate}
    \item [(i)] some edges of $E_{a}^{1}$ can match with some edges of $E_{a}^{2}$;
    \item [(ii)] some of the remaining edges of $E_{a}^{1}$ can match with some edges of $E_{a}^{3}$; 
    \item [(iii)] some of the remaining edges of $E_{a}^{2}$ can match with some edges of $E_{a}^{3}$;
    \item [(iv)] the remaining edges of $E_{a}^{1}$ are self-matched;
    \item [(v)] some of the remaining edges of $E_{a}^{2}$ can be self-matched and others can match with one another;
    \item [(vi)] the remaining edges of $E_{a}^{3}$ can match among themselves.
\end{enumerate}

For example, let $k = 5$ and $w = 12134321451$. For this word, the expectation looks like
\[
    \bbE[ X_{12} X_{21} X_{13} X_{34} X_{43} X_{32} X_{21} X_{14} X_{45} X_{51}] .
\]
Observe that the edge $\{1,2\}$ is traversed by the walk exactly thrice, $\{3,4\}$ is traversed twice and $\{1,3\}$, $\{2,3\}, \{1,4\}, \{4,5\}$ and $\{5,1\}$ are traversed exactly once.  One possible decomposition of this expectation is 
\[
    \bbE[X_{13} X_{34}] \bbE[X_{23} X_{12}] \bbE[X_{21} X_{43} ] \bbE[X_{21} X_{14}] \bbE[X_{45} X_{51}]. 
\]
This decomposition covers the cases (i), (ii), (iii) and (iv). On the other hand, the decomposition
\[
    \bbE[ X_{13} X_{12}] \bbE[X_{21}^2] \bbE[X_{34}^2] \bbE[X_{41} X_{45}] \bbE[X_{23} X_{51}]
\]
covers the cases (ii), (iv), (v) and (vi).

Let $\gamma_1$ many edges of $E_1$ match with $E_2$,$\gamma_2$ many edges of $E_1$ match with $E_3$ and $\gamma_3$ many edges of $E_2$ match with $E_3$. For (i) we first choose $\gamma_1$ edges from $E_1$ then $\gamma_1$ edges from $2E_2$ (accounting for direction) and match them. In this case, the expectation will contribute $\frac{1}{n^{\gamma_1(1 + \varepsilon)}}$. The total contribution from (i) is thus
\begin{align}\label{eq:est_case_i}
    \binom{E_1}{\gamma_1} \binom{2E_2}{\gamma_1} \gamma_1! \frac{1}{n^{\gamma_1(1 + \varepsilon)}}. 
\end{align}
Similarly, the contingency (ii) will contribute
\begin{align}\label{eq:est_case_ii}
    \binom{E_1 - \gamma_1}{\gamma_2} \binom{E_3}{\gamma_2} \gamma_2! \frac{1}{n^{\gamma_2(1 + \varepsilon)}}. 
\end{align} 
From (iii), we get 
\begin{align}\label{eq:est_case_iii}
    \binom{2E_2 - \gamma_1}{\gamma_3} \binom{E_3 - \gamma_2}{\gamma_3} \gamma_3! \frac{1}{n^{\gamma_3(1 + \varepsilon)}}.
\end{align}
For the case (iv), the arrangement of the remaining edges will contribute 
\begin{align}\label{eq:est_case_iv}
    \frac{(E_1 - \gamma_1 - \gamma_2)!}{\big(\frac{E_1 - \gamma_1 - \gamma_2}{2}\big)! 2^{\big(\frac{E_1 - \gamma_1 - \gamma_2}{2}\big)}} \frac{1}{n^{\big(\frac{E_1 - \gamma_1 - \gamma_2}{2}\big)(1 + \varepsilon)}}. 
\end{align}

We give an upper bound for (vi) by
\begin{align}\label{eq:est_case_vi}
 \frac{(E_3 - \gamma_2 - \gamma_3)!}{\big(\frac{E_3 - \gamma_2 - \gamma_3}{2}\big)! 2^{\big(\frac{E_3 - \gamma_2 - \gamma_3}{2}\big)}}.   
\end{align}
For (v), let $p$ edges from $(2E_2 - \gamma_1 - \gamma_3)$ match with one another and the remaining edges are self-matched. Then the total contribution is
\begin{align}\label{eq:est_case_v}
   \sum_{p = 0}^{2E_2 - \gamma_1 - \gamma_3} \binom{2E_2 - \gamma_1 - \gamma_3}{p} \frac{p!}{2^{\frac{p}{2}} (\frac{p}{2})!} \frac{1}{n^{\frac{p}{2}(1 + \varepsilon)}}.   
\end{align}
Now we give upper bounds on individuals terms. We shall use the inequality $\binom{n}{r} \le \frac{n^r}{r!}$ and the fact that $E_1, E_2, E_3 \le 2k$. For \eqref{eq:est_case_i}, we get
\begin{align}\label{eq:upper_bound_case_i}
    \binom{E_1}{\gamma_1} \binom{2E_2}{\gamma_1} \gamma_1! \frac{1}{n^{\gamma_1(1 + \varepsilon)}} \le \frac{E^{\gamma_1}_1}{\gamma_1 !} \frac{(2E_2)^{\gamma_1}}{\gamma_1 !} \gamma_1!  \frac{1}{n^{\gamma_1(1 + \varepsilon)}} \le (2k)^{2\gamma_1} \frac{1}{n^{\gamma_1(1 + \varepsilon)}}.
\end{align}
Similarly, for \eqref{eq:est_case_ii},
\begin{align}\label{eq:upper_bound_case_ii}
    \binom{E_1 - \gamma_1}{\gamma_2} \binom{E_3}{\gamma_2} \gamma_2! \frac{1}{n^{\gamma_2(1 + \varepsilon)}} \le (2k)^{2\gamma_2} \frac{1}{n^{\gamma_2(1 + \varepsilon)}}, 
\end{align}
and
\begin{align}\label{eq:upper_bound_case_iii}
    \binom{2E_2 - \gamma_1}{\gamma_3} \binom{E_3 - \gamma_2}{\gamma_3} \gamma_3! \frac{1}{n^{\gamma_3(1 + \varepsilon)}} \le (2k)^{2\gamma_3} \frac{1}{n^{\gamma_3(1 + \varepsilon)}}.
\end{align}
For controlling the terms in \eqref{eq:est_case_iv},\eqref{eq:est_case_v} and \eqref{eq:est_case_vi}, we shall use the inequality $\frac{2n!}{2^n n!} \le (2n)^n$ which holds for all $n \in \bbN$. For \eqref{eq:est_case_iv}, we have  
\begin{align}\label{eq:upper_bound_case_iv}
    \frac{(E_1 - \gamma_1 - \gamma_2)!}{\big(\frac{E_1 - \gamma_1 - \gamma_2}{2}\big)! 2^{\big(\frac{E_1 - \gamma_1 - \gamma_2}{2}\big)}} \frac{1}{n^{\big(\frac{E_1 - \gamma_1 - \gamma_2}{2}\big)(1 + \varepsilon)}} \le  \frac{(E_1 - \gamma_1 - \gamma_2)^{\frac{E_1 - \gamma_1 - \gamma_2}{2}}}{n^{\big(\frac{E_1 - \gamma_1 - \gamma_2}{2}\big)(1 + \varepsilon)}} \le  \frac{(2k)^{\frac{E_1 - \gamma_1 - \gamma_2}{2}}}{n^{\big(\frac{E_1 - \gamma_1 - \gamma_2}{2}\big)(1 + \varepsilon)}}.
\end{align}
For \eqref{eq:est_case_vi}, we have the upper bound 
\begin{align}\label{eq:upper_bound_case_vi}
    \frac{(E_3 - \gamma_2 - \gamma_3)!}{\big(\frac{E_3 - \gamma_2 - \gamma_3}{2}\big)! 2^{\big(\frac{E_3 - \gamma_2 - \gamma_3}{2}\big)}} \le (2k)^{\frac{E_3 - \gamma_2 - \gamma_3}{2}}.
\end{align}
Finally, for \eqref{eq:est_case_v}, we have
\begin{align}\label{eq:upper_bound_case_v} \nonumber
    \sum_{p = 0}^{2E_2 - \gamma_1 - \gamma_3} \binom{2E_2 - \gamma_1 - \gamma_3}{p} \frac{p!}{2^{\frac{p}{2}} (\frac{p}{2})!} \frac{1}{n^{\frac{p}{2}}(1 + \varepsilon)} &\le \sum_{p = 0}^{\infty} \frac{(2E_2 - \gamma_1 - \gamma_3)^p}{p!} \frac{p!}{2^{\frac{p}{2}} (\frac{p}{2})!} \frac{1}{n^{\frac{p}{2}(1 + \varepsilon)}} \\
    & \le \sum_{p = 0}^{\infty} \frac{(2k)^p}{n^{\frac{p}{2}(1 + \varepsilon)}} =  \frac{1}{\bigg(1 - \frac{2k}{n^{\frac{1 + \varepsilon}{2}}}\bigg)}.
\end{align}
Combining the estimates \eqref{eq:upper_bound_case_i}, \eqref{eq:upper_bound_case_ii}, \eqref{eq:upper_bound_case_iii}, \eqref{eq:upper_bound_case_iv}, \eqref{eq:upper_bound_case_vi}, \eqref{eq:upper_bound_case_v}, \eqref{eq:upper_bound_E3} and using the fact that $\gamma_1 \le E_1$, we have
\begin{align*}
    &\bbE\Tr[(n^{-1/2}X_{n})^{2k}] \\
    &\preceq \frac{1}{n^k} \sum_{t = 1}^{2k} n^t 2^{2k} (2k)^{(6k - 6t + 3E_1 + 6)}(2k)^{2(\gamma_1 + \gamma_2 +\gamma_3) + \frac{E_1 - \gamma_1 - \gamma_2}{2} + \frac{E_3 - \gamma_2 - \gamma_3}{2} } \\
    &\hspace{10em} \times \frac{1}{n^{\big(\frac{E_1 + \gamma_1 + \gamma_2}{2} + \gamma_3\big)(1 + \varepsilon)}} \frac{1}{\left(1 - \frac{2k}{n^{\frac{1 + \varepsilon}{2}}}\right)} \\
    &\le  \sum_{t = 1}^{2k} \frac{1}{n^{k - t}} 2^{2k} (2k)^{(6k - 6t + 3E_1 + 6)} (2k)^{\frac{E_1 + E_3 + 3\gamma_1 + 3\gamma_3}{2} + \gamma_2} \frac{1}{n^{\big(\frac{E_1 + \gamma_1 + \gamma_2}{2} + \gamma_3\big)(1 + \varepsilon)}} \frac{1}{\left(1 - \frac{2k}{n^{\frac{1 + \varepsilon}{2}}}\right)} \\
    &\le \sum_{t = 1}^{2k} \frac{ 2^{2k} (2k)^6 }{n^{k - t}} (2k)^{6(k - t + \frac{E_1}{2}) + \frac{E_3}{2}} (2k)^{E_1 + \gamma_1 + \gamma_2 + 2 \gamma_3} \frac{1}{n^{\big(\frac{E_1 + \gamma_1 + \gamma_2}{2} + \gamma_3\big)(1 + \varepsilon)}} \frac{1}{\left(1 - \frac{2k}{n^{\frac{1 + \varepsilon}{2}}}\right)} \\
    &\le \sum_{t = 1}^{2k} \bigg(\frac{(2k)^2}{n^{\varepsilon}}\bigg)^{\frac{E_1 + \gamma_1 + \gamma_2}{2} + \gamma_3} (2k)^{ 7(k - t + \frac{E_1}{2})} \frac{1}{n^{ k - t + \frac{E_1}{2}}} \frac{2^{2k}(2k)^6}{n^{\frac{\gamma_1 + \gamma_2}{2} + \gamma_3}} \frac{1}{\left(1 - \frac{2k}{n^{\frac{1 + \varepsilon}{2}}}\right)} \\
    &\le \sum_{t = 1}^{2k} \bigg(\frac{(2k)^2}{n^{\varepsilon}}\bigg)^{\frac{E_1 + \gamma_1 + \gamma_2}{2} + \gamma_3} \bigg(\frac{(2k)^7}{n}\bigg)^{k - t + \frac{E_1}{2}} \frac{2^{2k}(2k)^6}{n^{\frac{\gamma_1 + \gamma_2}{2} + \gamma_3}} \frac{1}{\left(1 - \frac{2k}{n^{\frac{1 + \varepsilon}{2}}}\right)} \\
    &\le \bigg(\frac{(2k)^2}{n^{\varepsilon}}\bigg)^{\frac{E_1}{2}} \frac{1}{\left(1 - \frac{2k}{n^{\frac{1 + \varepsilon}{2}}}\right)} \frac{2^{2k}(2k)^6}{n^{\frac{\gamma_1 + \gamma_2}{2} + \gamma_3}} \sum_{t = 1}^{2k}  \bigg(\frac{(2k)^7}{n}\bigg)^{k - t + \frac{E_1}{2}}.
\end{align*}
As a consequence, for any $\delta = 2+ \eta$ with $\eta>0$, we see that $\frac{\bbE\Tr[(n^{-1/2}X_{n})^{2k}]}{\delta^{2k}}$ is summable over $n$ for $k \asymp (\log n)^2$. The proof is now completed using the Borel-Cantelli lemma.
\end{proof}

\subsection{Proofs of Propositions~\ref{prop:univ} and \ref{prop:rad-construction}}
\begin{proof}[Proof of Proposition~\ref{prop:univ}]
Let $d_H(A, B)$ denote the Hausdorff distance between two subsets $A, B \subset \bbR$. For a symmetric matrix $A$, let $\spec(A)$ denote its spectrum. Consider matrices of the form
\[
    Z = Z_0 + \sum_{i = 1}^N Z_i,
\]
where $Z_0$ is a $ n \times n$ deterministic matrix and $Z_1, \ldots, Z_N$ be any independent $n \times n$ self-adjoint centered random matrices. Theorem~2.6 of \cite{brailovskaya2024universality} shows that if the matrices $Z_i$ are uniformly bounded in operator norm, then
\[
    \bbP(d_H(\spec(Z), \spec(G)) > C \varpi(t)) \le n e^{-t},
\]
where $G$ is a Gaussian random matrix with the same expectation and covariance structure as $X$ and 
\[
    \varpi(t) = \sigma_*(Z) t^{1/2} + R(Z)^{1/3} \sigma(Z)^{2/3} t^{2/3} + R(Z) t,
\]
with
\begin{align*}
    \sigma(Z) &:= \|\bbE[(Z - \bbE Z)^2]\|_{\op}^{1/2}, \\
    \sigma_*(Z) &:= \sup_{\|v\| = \|w\| = 1} \bbE[|\langle v, (Z - \bbE Z) w\rangle|]^{1/2}, \\
    R(Z) &:= \bigg\| \max_{1 \le i \le n} \|Z_i\|_{\op}\bigg\|_{\infty}.
\end{align*}
Let us now calculate these parameters for the ensemble $Z = n^{-1/2} X_n(\mathbf{\Psi})$. First note that
\[
    \bbE[X^2_{ij}] = \bbE \big[\sum_k X_{ik} X_{kj}\big] = O\bigg(\frac{1}{n^{\varepsilon}}\bigg), \quad \text{and} \quad \bbE[X^2_{ii}] = n(1 + o(1)).
\]
Hence
\[
    \bbE[X^2] = n(1 + o(1)) I + O\bigg(\frac{1}{n^{\varepsilon}}\bigg) (J - I) = n(1 + o(1)) I +  O\bigg(\frac{1}{n^{\varepsilon}}\bigg) J,
\]
and consequently,
\[
    \sigma(Z) = \|\bbE[(Z - \bbE Z)^2]\|_{\op}^{1/2} =  \frac{1}{\sqrt{n}} O(\sqrt{n}) = O(1).
\]
On the other hand,
\begin{align*}
    \sigma_*(Z) \le \|\Cov(Z)\|_{\op}^{1/2} &= \frac{1}{\sqrt{n}} \big\| \Cov(X_n)\big\|^{1/2}_{\op} \\
    &\le \frac{1}{\sqrt{n}} [\text{maximum row sum of $\Cov(X_n)$}]^{1/2} \\
    &\le \frac{1}{\sqrt{n}} O(\max\{1, n^{(1 - \varepsilon) / 2}\}) \\
    &= O\bigg(\frac{1}{n^{\min\{1, \varepsilon\}/2}}\bigg).
\end{align*} 
As for $R(Z)$, if $|\psi_{\ell}| \le K$ for all $1 \le \ell \le N$, then we have
\[
    R(Z) = \frac{1}{\sqrt{n}} \bigg\| \max_{1 \le \ell \le N} \|\psi_{\ell} Q_{\ell}\|_{\op} \bigg\|_{\infty} \le \frac{K}{\sqrt{n}} \max_{1 \le \ell \le N} \|Q_{\ell}\|_{\op}.
\]
Putting everything together,
\[
    \varpi(t) =  O\bigg( \frac{1}{n^{\min\{1,\varepsilon\}/2}} \bigg) t^{1/2} + \bigg(\frac{K}{\sqrt{n}} \max_{1 \le \ell \le N} \|Q_{\ell}\|_{\op} \bigg)^{1/3} t^{2/3} +  \bigg(\frac{K}{\sqrt{n}} \max_{1 \le \ell \le N} \|Q_{\ell}\|_{\op} \bigg) t.
\]
Let $\bZ$ be an $N$-dimensional vector of i.i.d. standard Gaussians. If we choose $t = 3 \log n$, then with probability at least $ 1 - \frac{1}{n^2}$,
\begin{align*}
    &d_H(\spec(n^{-1/2} X_n(\mathbf{\Psi})), \spec(n^{-1/2}X_n(\bZ))) \\
                       &\quad= O\bigg( \bigg(\frac{\log n}{n^{\min\{1, \varepsilon\}}}\bigg)^{1/2} + \bigg( \frac{K}{\sqrt{n}} \max_{1 \le \ell \le N} \|Q_{\ell}\|_{\op} (\log n)^2 \bigg)^{1/3} + \bigg( \frac{K}{\sqrt{n}} \max_{1 \le \ell \le N} \|Q_{\ell}\|_{\op} \log n \bigg) \bigg).
\end{align*}
It is clear that the upper bound is $o(1)$ if $\max_{1 \le \ell \le N} \|Q_{\ell}\|_{\op} \ll \frac{\sqrt{n}}{(\log n)^2}$, which is precisely the condition in \eqref{ass:operator_norm_Q}. We immediately reach the conclusion that
\[
    \lambda_1(n^{-1/2} X_n(\mathbf{\Psi})) - \lambda_1(n^{-1/2} X_n(\bZ)) \convas 0.
\]
By virtue of Theorem~\ref{thm:largest_ev}, we know that $\lambda_{1}(n^{-1/2}X_n(\bZ)) \convas 2$. It follows therefore that $\lambda_1(n^{-1/2} X_n(\mathbf{\Psi})) \convas 2$.
\end{proof}

\begin{proof}[Proof of Proposition~\ref{prop:rad-construction}]
For $1 \le i \le j \le n$, let
\[
    \bv^{(ij)} = \frac{1}{\sqrt{2Np}}(\eta^{(ij)}_1, \ldots, \eta^{(ij)}_N),
\]
Since $ \|\bv^{(ij)}\|^2_2 = \frac{1}{2Np} \sum_{\ell = 1}^N (\eta^{(ij)}_\ell)^2$, by Bernstein's inequality,
\begin{align*}
    \bbP( | \|\bv^{(ij)}\|^2_2 - 1 | > t ) & \le 2 \exp \bigg( - \frac{2 N^2p^2t^2}{2Np( 1 - 2p) + 2Np t/3} \bigg) \\
    &= 2\exp \bigg( - \frac{Npt^2}{ (1 - 2p) + t/3}\bigg).
\end{align*}
Choose $t = \frac{1}{(\log n)^2}$. By a union bound, under our assumptions on $p$,
\begin{align*}
    \bbP( \exists \, i, j  \text{ s.t. }  | \|\bv^{(ij)}\|^2_2 - 1 | > t ) &\le O(n^2) \cdot \exp \bigg( - \frac{Npt^2}{ (1 - 2p) + t/3}\bigg) \\
    &= O(n^2) \cdot  \exp{\bigg( - \frac{Npt^2}{ 1 + o(1)}\bigg)}.
\end{align*}
Thus as long as $N \ge \tilde{C} n (\log n)^4$ for some large enough constant $\tilde{C} > 0$, with probability at least $1 - O(n^{-2})$, the matrices $Q_{\ell}$ will satisfy \eqref{ass:Q_entries_1}.

Now, 
$\langle \bv^{(ij)}, \bv^{(i'j')} \rangle = \frac{1}{2Np} \sum_{\ell = 1}^N \eta^{(ij)}_\ell \eta^{(i'j')}_\ell $. Note that 
\[
    \eta^{(ij)}_1 \eta^{(i'j')}_1 = \begin{cases}
    1 & \text{w.p. } 2p^2, \\
    -1 & \text{w.p. } 2p^2, \\
    0 & \text{w.p. } 1 - 4p^2.
\end{cases}
\]
Thus $\bbE [\eta^{(ij)}_1 \eta^{(i'j')}_1  ] = 0$ and $\bbE [(\eta^{(ij)}_1 \eta^{i'j'}_1)^2]= 4 p^2$. By Bernstein's inequality,
\begin{align*}
    \bbP(|\langle \bv^{(ij)}, \bv^{(i'j')} \rangle| > t) & \le 2\exp\bigg( - \frac{2N^2p^2t^2}{4Np^2 + 2Npt / 3} \bigg) = 2\exp\bigg( - \frac{Npt^2}{2p + t/3} \bigg).
\end{align*} 
Choose $t = K n^{-( 1 + \varepsilon)}$. Since $p = \Omega(\frac{\log n}{n}) \gg n^{-(1 + \varepsilon)}$, by a union bound,
\begin{align*}
    \bbP(\exists \, i,  j, i', j' \text{ s.t. } |\langle \bv^{(ij)}, \bv^{(i'j')} \rangle | > K n^{-(1 + \varepsilon)}) &\le O(n^4) \cdot \exp\bigg( -  \frac{K^2 N n^{-( 2 + 2 \varepsilon)}p}{2p + K n^{-( 1 + \varepsilon)} / 3} \bigg) \\
    & = O(n^4) \cdot \exp\big( - C_K N n^{-(2 + 2 \varepsilon)}\big),
 \end{align*}
for some constant $C_K > 0$.
Thus as long as $N \ge C n^{2 + 2\varepsilon} \log n$ for some suitably large constant $C > 0$, with probability at least $1 - O(n^{-2})$, the matrices $Q_\ell$ will satisfy
\eqref{ass:Q_entries_2}. This complete the proof of part (a).

Now we prove part (b). By modifying the proof of Theorem~1.7 in \cite{basak2017invertibility} for symmetric matrices, one can show that if $np \ge C_0 \log n$, then there exist constants $c, C > 0$ such that 
\[
    \bbP (\|Q_1\| \ge C \sqrt{np}) \le \exp(- c n p). 
\]
Therefore
\begin{align*}
    \bbP\bigg(\max_{1 \le \ell \le N} \|Q_{\ell}\|_{\op} > \sqrt{np}\bigg) & \le N \bbP(\|Q_{\ell}\|_{\op} > \sqrt{np}) \le N \exp(- c np).
\end{align*}
Thus if we choose $\frac{\log n}{n} \ll p \ll \frac{1}{(\log n)^4}$ and $N \ge C_1 n^{2 + 2\varepsilon} \log n$ for some large enough constant $C_1 > 0$, then it follows that with probability at least $1 - O(n^{-2})$, we have
\[
    \max_{1 \le \ell \le N} \|Q_{\ell}\|_{\op} = o\bigg(\frac{\sqrt{n}}{(\log n)^2}\bigg).
\]
This completes the proof.
\end{proof}

\subsection{Proofs of Theorem~\ref{thm:fluc} and Corollary~\ref{cor:fluc}}
Let $\bS = Y \bone$ and $S_i = \sum_j Y_{ij}$, i.e. $\bS = ( S_1, S_2, \ldots, S_n)^\top$. We decompose
\[
    \bone = \bv + \br,
\]
where $Y\bv = \lambda_1 \bv$ and $\bv^\top \br = 0$. Then write
\[
    \bS = Y\bone = Y\bv + Y\br = \lambda_1 \bv + Y\br
\]
Note that
\[
    \bbE \bS = L \bone,
\]
where $L = n\mu = \lambda \sqrt{n}$.

The most crucial ingredient in the proof is the following observation, referred to as the \emph{von Mises iteration} in \cite{furedi1981eigenvalues}.
\begin{lemma}[von Mises iteration]\label{lem:von_Mises}
We have
\begin{equation}\label{eq:von_Mises}
    \lambda_1 = \frac{\bS^\top \bS}{\bS^\top \bone} + \frac{\lambda_1 \br^\top Y \br - \|Y\br\|^2}{\bS^\top\bone}.
\end{equation}
\end{lemma}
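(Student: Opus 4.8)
The plan is to derive the von Mises identity \eqref{eq:von_Mises} by a direct algebraic manipulation, using only the orthogonal decomposition $\bone = \bv + \br$ with $Y\bv = \lambda_1\bv$ and $\bv^\top\br = 0$, together with the symmetry of $Y$. The key quantities are $\bS = Y\bone$ and the two scalars $\bS^\top\bone$ and $\bS^\top\bS$. First I would expand $\bS^\top\bone$: since $Y$ is symmetric, $\bS^\top\bone = \bone^\top Y\bone = (\bv+\br)^\top Y(\bv+\br) = \lambda_1\bv^\top\bv + 2\lambda_1\bv^\top\br + \br^\top Y\br = \lambda_1\|\bv\|^2 + \br^\top Y\br$, using $\bv^\top\br = 0$. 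Similarly, $\bS^\top\bS = \|Y\bone\|^2 = \|\lambda_1\bv + Y\br\|^2 = \lambda_1^2\|\bv\|^2 + 2\lambda_1\bv^\top Y\br + \|Y\br\|^2$. For the cross term I would use symmetry again: $\bv^\top Y\br = (Y\bv)^\top\br = \lambda_1\bv^\top\br = 0$. Hence $\bS^\top\bS = \lambda_1^2\|\bv\|^2 + \|Y\br\|^2$.

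Next I would form the ratio $\frac{\bS^\top\bS}{\bS^\top\bone} = \frac{\lambda_1^2\|\bv\|^2 + \|Y\br\|^2}{\lambda_1\|\bv\|^2 + \br^\top Y\br}$. The goal is to show this equals $\lambda_1 - \frac{\lambda_1\br^\top Y\br - \|Y\br\|^2}{\bS^\top\bone}$. Equivalently, writing $D = \bS^\top\bone = \lambda_1\|\bv\|^2 + \br^\top Y\br$, I want $\bS^\top\bS = \lambda_1 D - \lambda_1\br^\top Y\br + \|Y\br\|^2$. Expanding the right-hand side gives $\lambda_1^2\|\bv\|^2 + \lambda_1\br^\top Y\br - \lambda_1\br^\top Y\br + \|Y\br\|^2 = \lambda_1^2\|\bv\|^2 + \|Y\br\|^2$, which is exactly the expression for $\bS^\top\bS$ computed above. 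This verifies \eqref{eq:von_Mises} provided $D = \bS^\top\bone \ne 0$; for the intended application $\lambda_1$ is close to $\lambda + 1/\lambda$ with $\lambda$ large and $\|\bv\|^2$ close to $1$, so $D$ is of order $\lambda\sqrt{n} \cdot$(something), hence nonzero for $n$ large, and one can note this caveat.

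There is essentially no hard obstacle here — the identity is a formal consequence of the eigenvalue relation and symmetry, and the only thing to be slightly careful about is the non-vanishing of the denominator $\bS^\top\bone$ and the bookkeeping of which cross terms vanish. The presentation can be kept to a few lines: state the two expansions of $\bS^\top\bone$ and $\bS^\top\bS$, invoke $\bv^\top\br = 0$ and $\bv^\top Y\br = \lambda_1\bv^\top\br = 0$, and then algebraically rearrange. I would write it as:

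\begin{proof}
Since $Y$ is symmetric and $Y\bv = \lambda_1\bv$ with $\bv^\top\br = 0$, we have
\[
    \bS^\top\bone = \bone^\top Y\bone = (\bv + \br)^\top Y(\bv + \br) = \lambda_1\|\bv\|^2 + 2\lambda_1\bv^\top\br + \br^\top Y\br = \lambda_1\|\bv\|^2 + \br^\top Y\br,
\]
and, using also $\bv^\top Y\br = (Y\bv)^\top\br = \lambda_1\bv^\top\br = 0$,
\[
    \bS^\top\bS = \|Y\bone\|^2 = \|\lambda_1\bv + Y\br\|^2 = \lambda_1^2\|\bv\|^2 + 2\lambda_1\bv^\top Y\br + \|Y\br\|^2 = \lambda_1^2\|\bv\|^2 + \|Y\br\|^2.
\]
Assuming $\bS^\top\bone \ne 0$ (which holds for $n$ large in our setting), we compute
\[
    \lambda_1 \cdot \bS^\top\bone - \big(\lambda_1\br^\top Y\br - \|Y\br\|^2\big) = \lambda_1^2\|\bv\|^2 + \lambda_1\br^\top Y\br - \lambda_1\br^\top Y\br + \|Y\br\|^2 = \lambda_1^2\|\bv\|^2 + \|Y\br\|^2 = \bS^\top\bS.
\]
Dividing by $\bS^\top\bone$ yields \eqref{eq:von_Mises}.
\end{proof}
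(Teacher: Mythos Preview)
Your proof is correct and follows essentially the same route as the paper's: both expand $\bS^\top\bone$ and $\bS^\top\bS$ via the decomposition $\bone=\bv+\br$, using $\bv^\top\br=0$ and $\bv^\top Y\br=\lambda_1\bv^\top\br=0$, and then rearrange algebraically. Your write-up is in fact slightly more explicit than the paper's (which leaves the final ``simple algebraic calculation'' to the reader) and sensibly flags the non-vanishing of $\bS^\top\bone$.
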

\begin{proof}
We have, using the orthogonality of $\bv$ and $\br$, that
\begin{align*}
    \frac{\bS^\top \bS}{\bS^\top \bone} = \frac{\|Y\bone\|^2}{\bone^\top Y \bone} = \frac{\|\lambda_1 \bv + Y\br\|^2}{(\bv + \br)^\top (\lambda_1 \bv + Y\br)} = \frac{\lambda_1^2 \|\bv\|^2 + \|Y\br\|^2}{\lambda_1\|\bv\|^2 + \br^\top Y\br}. 
\end{align*}
A simple algebraic calculation then shows that the quantity in the right hand side above equals $\lambda_1 + \frac{\|Y\br\|^2 - \lambda_1 \br^\top Y \br}{\bS^\top \bone}$. This completes the proof.
\end{proof}

We need to control the various quantities appearing in \eqref{eq:von_Mises}. This will be done via a series of Lemmas. 
Let $Z_i := S_i - L = \sum_j X_{ij}$. Then $\bbE[Z_i] = 0$ and the following estimates hold.  
\begin{lemma}\label{lem:Zi}
    We have
    \begin{enumerate}
        \item [(i)] $\Var(Z_i) = n + O(n^{1 - \varepsilon})$.
        \item [(ii)] $\Cov(Z_i, Z_{i'}) = 1 + O(n^{1 - \varepsilon})$, $i \ne i'$.
        \item [(iii)] $\Var(Z_i^2) = O(n^2)$.
        \item [(iv)] $\Cov(Z_i^2, Z_{i'}^2) = 2 + O(n^{2 - 2 \varepsilon})$, $i \ne i'$.
    \end{enumerate}
\end{lemma}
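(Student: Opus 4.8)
The plan is to compute each of the four quantities directly via Wick's formula (Lemma~\ref{lem:Wick's_formula}) applied to the Gaussian vector $(X_{ij})$, exploiting the correlation bound \eqref{eq:correlation} to control all ``off-diagonal'' pairings. Throughout I write $\rho_{(ij),(i'j')} = \bbE[X_{ij}X_{i'j'}]$, so that $|\rho| \le C n^{-(1+\varepsilon)}$ whenever $(i,j) \ne (i',j')$, and $\rho_{(ij),(ij)} = 1$ (recalling that $X_{ij} = X_{ji}$ as matrix entries, so $X_{ij}$ and $X_{ji}$ are the same random variable).

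\medskip
\noindent
\textbf{Parts (i) and (ii).} For (i), $\Var(Z_i) = \sum_{j,j'} \bbE[X_{ij} X_{ij'}] = \sum_j \bbE[X_{ij}^2] + \sum_{j \ne j'} \rho_{(ij),(ij')} = n + O(n^2 \cdot n^{-(1+\varepsilon)}) = n + O(n^{1-\varepsilon})$; one must only be slightly careful about the diagonal term $X_{ii}$, which contributes $\Var(X_{ii}) = 1$ and is absorbed into the error. For (ii) with $i \ne i'$, $\Cov(Z_i, Z_{i'}) = \sum_{j, j'} \rho_{(ij),(i'j')}$. The key point is that among the $n^2$ pairs $\{(i,j),(i'j')\}$, exactly one pair --- namely $j = i'$, $j' = i$ --- gives $(i,j) = (j',i') $ as an \emph{unordered} matrix position, hence $\rho = \bbE[X_{ii'}^2] = 1$; all remaining $n^2 - 1$ pairs are genuinely distinct positions and contribute $O(n^{-(1+\varepsilon)})$ each, for a total of $1 + O(n^{1-\varepsilon})$.

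\medskip
\noindent
\textbf{Parts (iii) and (iv).} These require Wick's formula for fourth moments. Write $Z_i^2 = \sum_{j_1, j_2} X_{ij_1} X_{ij_2}$, so $\bbE[Z_i^2 Z_{i'}^2] = \sum_{j_1,j_2,j_3,j_4} \bbE[X_{ij_1} X_{ij_2} X_{i'j_3} X_{i'j_4}]$, and Wick expands each summand into three pairings. For (iv) the ``disconnected'' pairing $\{X_{ij_1}X_{ij_2}\}\{X_{i'j_3}X_{i'j_4}\}$ sums to $\bbE[Z_i^2]\bbE[Z_{i'}^2]$, which cancels against $\bbE[Z_i^2]\bbE[Z_{i'}^2]$ in the covariance. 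Each of the two ``cross'' pairings contributes $\sum_{j_1,j_2,j_3,j_4} \rho_{(ij_1),(i'j_3)} \rho_{(ij_2),(i'j_4)}$ (and the analogous term); by the reasoning of part (ii), each factor $\sum_{j,j'}\rho_{(ij),(i'j')} = 1 + O(n^{1-\varepsilon})$, so each cross pairing gives $(1 + O(n^{1-\varepsilon}))^2 = 1 + O(n^{1-\varepsilon})$, and the two together yield $2 + O(n^{1 - \varepsilon}) = 2 + O(n^{2-2\varepsilon})$ (the stated error is the weaker of the two and holds a fortiori; one should double-check the bookkeeping here). For (iii), $\Var(Z_i^2) = \bbE[Z_i^4] - (\bbE[Z_i^2])^2$; Wick gives $\bbE[Z_i^4] = 3(\bbE[Z_i^2])^2 = 3(n + O(n^{1-\varepsilon}))^2 = O(n^2)$, so $\Var(Z_i^2) = 2(\bbE[Z_i^2])^2 + \text{(cancellation of the squared term)} = O(n^2)$.

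\medskip
\noindent
The computations are all elementary; the only mild subtlety --- and the step I would be most careful about --- is the correct identification of which pairings in the Wick expansion produce a single ``exact-hit'' factor of $1$ (because $X_{ij}$ and $X_{ji}$ coincide as matrix entries) versus only a factor $O(n^{-(1+\varepsilon)})$, since miscounting these by even one would change the leading constants $1$ in (ii) and $2$ in (iv). Once the pairing combinatorics are pinned down, collecting the error terms and using $n^2 \cdot n^{-(1+\varepsilon)} = n^{1-\varepsilon}$ (and its square for the fourth-moment errors) finishes the proof.
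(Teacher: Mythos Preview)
Your approach is correct and considerably more economical than the paper's. The key difference: you apply Wick's formula directly to the jointly Gaussian pair $(Z_i, Z_{i'})$, using the identities $\Var(Z_i^2) = 2(\Var Z_i)^2$ and $\Cov(Z_i^2, Z_{i'}^2) = 2(\Cov(Z_i, Z_{i'}))^2$, which reduces (iii) and (iv) immediately to (i) and (ii). The paper instead expands $Z_i^2 = \sum_j X_{ij}^2 + \sum_{j\ne j'} X_{ij}X_{ij'}$ and applies Wick to each resulting four-fold product of $X$'s, producing a page-long case analysis of five covariance types. Your route is shorter and less error-prone; the paper's decomposition would only be necessary if the $Z_i$'s were not themselves Gaussian.

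One bookkeeping caveat in (iv): from $2(1 + O(n^{1-\varepsilon}))^2 = 2 + O(n^{1-\varepsilon}) + O(n^{2-2\varepsilon})$ you conclude the stated error $O(n^{2-2\varepsilon})$ holds ``a fortiori''. This is valid only for $\varepsilon \le 1$; when $\varepsilon > 1$ the linear term $O(n^{1-\varepsilon})$ dominates and is \emph{larger} than $O(n^{2-2\varepsilon})$, so the bound in the lemma's statement is actually too tight in that regime. This does not affect anything downstream --- the lemma feeds into $\Var(\|\bS - L\bone\|^2) = O(n^{\max\{3, 4-2\varepsilon\}})$, where the extra $n^2 \cdot O(n^{1-\varepsilon}) = O(n^{3-\varepsilon})$ is absorbed into $O(n^3)$ --- and the paper's own term-by-term argument also misses these $O(n^{1-\varepsilon})$ contributions (arising whenever one Wick factor hits $\bbE[X_{ii'}^2] = 1$, e.g.\ the case $j=i',\,k=i$ in $\Cov(X_{ij}^2, X_{i'k}X_{i'k'})$). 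So your instinct to ``double-check the bookkeeping'' was well-placed, but the issue lies with the stated exponent rather than with your argument.
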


The proof of Lemma~\ref{lem:Zi} uses Wick's formula and is given in the appendix.

\begin{lemma}\label{lem:S_dot_1}
We have
\begin{enumerate}
    \item [(i)] $\bbE[\bS^\top \bone] = \lambda n \sqrt{n}$.
    \item [(ii)] $\Var(\bS^\top \bone) = 2n^2 + O(n^{3 - \varepsilon})$.
\end{enumerate}
\end{lemma}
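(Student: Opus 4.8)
The plan is to reduce everything to the quantities already estimated in Lemma~\ref{lem:Zi}. The starting point is the identity $\bS^\top\bone = \bone^\top Y\bone = \sum_{i,j} Y_{ij}$. Since $Y_{ij} = X_{ij} + \mu$ with $\mu = \lambda/\sqrt n$, we may write $\bS^\top\bone = \sum_{i,j} X_{ij} + n^2\mu = \sum_{i,j} X_{ij} + \lambda n\sqrt n$. Because each $X_{ij}$ is centered, taking expectations immediately gives part (i), namely $\bbE[\bS^\top\bone] = \lambda n\sqrt n$.

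For part (ii), note that the deterministic shift $\lambda n\sqrt n$ does not affect the variance, so $\Var(\bS^\top\bone) = \Var(\sum_{i,j} X_{ij}) = \Var(\sum_{i=1}^n Z_i)$, where $Z_i = \sum_j X_{ij}$ as in Lemma~\ref{lem:Zi}. Expanding,
\[
    \Var\Big(\sum_{i=1}^n Z_i\Big) = \sum_{i=1}^n \Var(Z_i) + \sum_{i \ne i'} \Cov(Z_i, Z_{i'}).
\]
Now plug in Lemma~\ref{lem:Zi}(i), which gives $\Var(Z_i) = n + O(n^{1-\varepsilon})$ so that the first sum is $n^2 + O(n^{2-\varepsilon})$, and Lemma~\ref{lem:Zi}(ii), which gives $\Cov(Z_i, Z_{i'}) = 1 + O(n^{1-\varepsilon})$ so that the second sum, over the $n(n-1)$ ordered pairs with $i\ne i'$, is $n(n-1) + O(n^{3-\varepsilon}) = n^2 + O(n^{3-\varepsilon})$. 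Adding the two pieces yields $\Var(\bS^\top\bone) = 2n^2 + O(n^{3-\varepsilon})$, as claimed.

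Since the statement is an immediate corollary of Lemma~\ref{lem:Zi}, there is essentially no obstacle here; the only point requiring (minor) care is the bookkeeping of error terms — one should check that the $O(n^{3-\varepsilon})$ coming from the roughly $n^2$ off-diagonal covariance errors dominates the $O(n^{2-\varepsilon})$ contributed by the $n$ diagonal terms, so that the final error is $O(n^{3-\varepsilon})$, which is $o(n^2)$ precisely because $\varepsilon > 0$. All the genuinely probabilistic content — the Wick-formula computation of $\Var(Z_i)$ and $\Cov(Z_i,Z_{i'})$ under the correlation bound \eqref{eq:correlation} — is deferred to the proof of Lemma~\ref{lem:Zi} in the appendix.
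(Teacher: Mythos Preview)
Your proof is correct and follows exactly the same route as the paper: reduce to $\Var(\sum_i Z_i)$, expand into diagonal and off-diagonal sums, and plug in Lemma~\ref{lem:Zi}(i)--(ii). One small slip in your closing commentary: $O(n^{3-\varepsilon})$ is \emph{not} $o(n^2)$ for $0<\varepsilon\le 1$ --- in that regime the error term actually dominates the $2n^2$ --- but this remark is extraneous to the lemma and does not affect the argument.
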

\begin{proof}
For (ii), we use Lemma~\ref{lem:Zi} to get
\[
    \Var(\bS^\top \bone) = \Var\big(\sum_i Z_i\big) = \sum_i \Var(Z_i) + \sum_{i \ne i'} \Cov(Z_i, Z_{i'}) = 2 n^2 + O(n^{3 - \varepsilon}).
\] 
This proves the desired result.
\end{proof}

\begin{lemma}\label{lem:S-minus-L-squared}
We have
\begin{enumerate}
    \item [(i)] $\bbE \|\bS - L\bone\|^2 = n^2 + O(n^{2 - \varepsilon})$.
    \item [(ii)] $\Var(\|\bS - L\bone\|^2) = O(n^{\max\{3, 4 - 2\varepsilon\}})$.
\end{enumerate}
    
\end{lemma}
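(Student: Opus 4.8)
The plan is to reduce everything to the quantities $Z_i = \sum_j X_{ij}$ already analysed in Lemma~\ref{lem:Zi}, since $\|\bS - L\bone\|^2 = \sum_i (S_i - L)^2 = \sum_i Z_i^2$. For part (i), I would simply write $\bbE\|\bS - L\bone\|^2 = \sum_i \bbE[Z_i^2] = \sum_i \Var(Z_i)$ and invoke Lemma~\ref{lem:Zi}(i), which gives $\Var(Z_i) = n + O(n^{1-\varepsilon})$; summing over the $n$ values of $i$ produces $n^2 + O(n^{2-\varepsilon})$, as claimed.

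For part (ii), the starting point is $\Var(\|\bS - L\bone\|^2) = \Var\big(\sum_i Z_i^2\big) = \sum_i \Var(Z_i^2) + \sum_{i \ne i'} \Cov(Z_i^2, Z_{i'}^2)$. The diagonal terms contribute $\sum_i \Var(Z_i^2) = O(n) \cdot O(n^2) = O(n^3)$ by Lemma~\ref{lem:Zi}(iii). The off-diagonal terms contribute $\sum_{i\ne i'} \Cov(Z_i^2, Z_{i'}^2)$, and here I would use Lemma~\ref{lem:Zi}(iv): each covariance is $2 + O(n^{2-2\varepsilon})$, and there are $n(n-1)$ such pairs, so the total is $2n(n-1) + O(n^2) \cdot O(n^{2-2\varepsilon}) = O(n^2) + O(n^{4-2\varepsilon})$. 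Adding the two contributions gives $O(n^3) + O(n^2) + O(n^{4-2\varepsilon}) = O(n^{\max\{3, 4-2\varepsilon\}})$, since the $O(n^2)$ term is always dominated.

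I do not anticipate a genuine obstacle here: the lemma is essentially a bookkeeping consequence of Lemma~\ref{lem:Zi}, whose Wick-formula proof is deferred to the appendix. The only point requiring a little care is making sure the cross term in the variance-of-a-sum formula is handled with the right combinatorial count ($n(n-1)$ ordered pairs) and that one correctly identifies which of the three error terms dominates as a function of $\varepsilon$ — for $\varepsilon < 1/2$ the $n^{4-2\varepsilon}$ term wins, while for $\varepsilon \ge 1/2$ the $n^3$ term wins, and $\max\{3, 4-2\varepsilon\}$ captures both cases uniformly. One should also note that the leading constant $2n(n-1)$ from the off-diagonal covariances is absorbed into the $O(n^2)$ error and hence into the stated bound, so no sharper constant is claimed or needed at this stage.
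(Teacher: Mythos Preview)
Your proposal is correct and matches the paper's proof essentially line for line: both parts are reduced to $\|\bS - L\bone\|^2 = \sum_i Z_i^2$ and then Lemma~\ref{lem:Zi} is invoked term by term, with the same diagonal/off-diagonal split for the variance. Your extra remark isolating the $2n(n-1)=O(n^2)$ contribution is a minor refinement the paper simply absorbs into the $O(n^3)$ term.
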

\begin{proof}
We will use the estimates obtained in Lemma~\ref{lem:Zi}. First note that
\[
    \bbE \|\bS - L\bone\|^2 =  \sum_i \bbE [Z_i^2] = \sum_i \Var(Z_i) = n^2 + O(n^{2 - \varepsilon}).
\]
This proves (i).
On the other hand,
\begin{align*}
    \Var(\|\bS - L\bone\|^2) &= \Var\big( \sum_i Z^2_i\big) \\
    &= \sum_i \Var(Z^2_i) + \sum_{i \neq i'} \Cov( Z^2_i , Z^2_{i'}) \\
    &= O(n^3) + O(n^{4 - 2\varepsilon}) \\
    &= O(n^{\max\{3, 4 - 2\varepsilon\}}).
\end{align*}
This completes the proof of (ii).
\end{proof}
If an event occurs with probability at least $1 - O(n^{-c})$, we say that it happens with polynomially high probability (abbrv. w.p.h.p.). 
\begin{lemma}\label{lem:upper_bound_r_and_Yr}
   Suppose that $\lambda > 4$. Then there are constant $C, C' > 0$ such that
   \begin{enumerate}
       \item [(i)] $\|\br\|^2 \le \frac{C n}{\lambda^2}$ w.p.h.p.
       \item [(ii)] $\|Y\br\|^2 \le \frac{C' n^2}{\lambda^2}$ w.p.h.p.
   \end{enumerate}
\end{lemma}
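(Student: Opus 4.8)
<br>

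The plan is to control $\|\br\|^2$ and $\|Y\br\|^2$ by exploiting the decomposition $\bone = \bv + \br$ with $Y\bv = \lambda_1 \bv$, $\bv^\top \br = 0$, together with the concentration estimates already established in Lemmas~\ref{lem:S_dot_1} and \ref{lem:S-minus-L-squared}.

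\textbf{Step 1: Bound $\|\br\|^2$.} First I would note that the eigenvector equation gives $\bS = Y\bone = \lambda_1 \bv + Y\br$, and since $\bv^\top \br = 0$, the quantity $\bS^\top \bone = \bone^\top Y \bone = \lambda_1 \|\bv\|^2 + \br^\top Y \br$. Also $\bS^\top \bS = \lambda_1^2 \|\bv\|^2 + \|Y\br\|^2$. From Lemma~\ref{lem:S_dot_1}, $\bbE[\bS^\top \bone] = \lambda n^{3/2}$ with $\Var(\bS^\top\bone) = O(n^2)$, so by Chebyshev $\bS^\top\bone = \lambda n^{3/2}(1 + o_P(1))$; in fact one gets w.p.h.p.\ concentration by a standard polynomial-moment or Gaussian-concentration bound since $\bS^\top \bone = \sum_i Z_i + L n$ is (a shift of) a linear functional of a Gaussian vector, and $\|\bS - L\bone\|^2 = \sum_i Z_i^2$ concentrates w.p.h.p.\ around $n^2$ by Lemma~\ref{lem:S-minus-L-squared} combined with the Gaussian Poincar\'e/Hanson--Wright-type inequality (or Borell--TIS on $\|\bS - L\bone\|$). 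Now, since $\lambda_1 \ge (\bone^\top Y \bone)/\|\bone\|^2 = (\bS^\top\bone)/n = \lambda\sqrt{n}(1 + o_P(1))$ is large, and $\lambda_1 \le \|Y\|_{\op}$, I would bound $\lambda_1$ from above: $\|Y\|_{\op} \le \|X_n/\sqrt{n}\|_{\op}\sqrt{n} + \mu\|\bone\bone^\top\|_{\op} = O(\sqrt{n}) + \lambda\sqrt{n} = O(\lambda\sqrt{n})$ w.p.h.p.\ (using Theorem~\ref{thm:largest_ev} for the norm of $X_n/\sqrt n$, or just the standard Gaussian operator-norm bound). Writing $\bv = \bone - \br$ and using $\|\bv\|^2 = n - \|\br\|^2$, one has $\lambda_1(n - \|\br\|^2) = \bS^\top \bone - \br^\top Y \br$. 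To extract $\|\br\|^2$, I would instead compare $\|\bS\|^2$ two ways: $\|\bS\|^2 = \lambda_1^2\|\bv\|^2 + \|Y\br\|^2 \ge \lambda_1^2\|\bv\|^2$, while $\|\bS\|^2 = \|L\bone + (\bS - L\bone)\|^2 = L^2 n + 2L\sum_i Z_i + \sum_i Z_i^2 = \lambda^2 n^2(1 + o_P(1))$ w.p.h.p.\ (the cross term is $O_P(\lambda n \cdot \sqrt{n})$ and the last term is $O_P(n^2)$, both lower order since $\lambda \to \infty$). Combined with $\lambda_1 \ge \lambda\sqrt n(1+o_P(1))$, this yields $\lambda^2 n^2(1+o_P(1)) \ge \lambda^2 n (n - \|\br\|^2)(1+o_P(1))$, i.e.\ $\|\br\|^2 \le (\text{const})\, n/\lambda^2$ once one is slightly more careful with the deterministic $\lambda^2 n^2$ versus $\lambda_1^2 n$ gap --- here one uses the more precise two-sided bound $\lambda_1 = \lambda\sqrt n + O_P(\cdot)$ which follows from the von Mises identity \eqref{eq:von_Mises} itself, or bootstraps: a crude bound $\|\br\|^2 = o_P(n)$ first, then feeding back gives $\lambda_1^2\|\bv\|^2 = (\bS^\top\bone)^2/\|\bv\|^2 \cdot(\ldots)$ and the sharp $n/\lambda^2$ rate. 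I expect the cleanest route is: from $\bS^\top\bone = \lambda_1\|\bv\|^2 + \br^\top Y\br$ and $|\br^\top Y \br| \le \|Y\|_{\op}\|\br\|^2 = O(\lambda\sqrt n)\|\br\|^2$, plus $\|\bS\|^2 \ge \lambda_1^2\|\bv\|^2 = \lambda_1 (\bS^\top\bone - \br^\top Y\br)$, solve for $\|\br\|^2$ after substituting the w.p.h.p.\ values $\bS^\top\bone \asymp \lambda n^{3/2}$, $\|\bS\|^2 \asymp \lambda^2 n^2$, and $\lambda_1 \asymp \lambda\sqrt n$.

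\textbf{Step 2: Bound $\|Y\br\|^2$.} Once (i) holds, (ii) is immediate from the operator-norm bound: $\|Y\br\|^2 \le \|Y\|_{\op}^2 \|\br\|^2 \le (O(\lambda\sqrt n))^2 \cdot \frac{Cn}{\lambda^2} = O(n^2) \cdot C = O(n^2)$. Wait --- the claimed bound is $C'n^2/\lambda^2$, which is sharper. To get the $\lambda^{-2}$ saving, I would instead use $\|Y\br\|^2 = \|\bS\|^2 - \lambda_1^2\|\bv\|^2 = \|\bS\|^2 - \lambda_1^2(n - \|\br\|^2)$ and note that $\|\bS\|^2 - \lambda_1^2 n$ is a difference of two quantities each $\asymp \lambda^2 n^2$, so cancellation must be exploited: write $\|\bS\|^2 = (\bone^\top Y \cdot Y\bone)$ and use $\lambda_1 n \approx \lambda_1 \bone^\top\bv = \bone^\top Y \bv$ is close to $\bone^\top Y\bone = \bS^\top\bone$, so $\lambda_1^2 n \approx (\bS^\top\bone)^2/n$ up to errors controlled by $\|\br\|$; then $\|\bS\|^2 - (\bS^\top\bone)^2/n = \|\bS - \frac{\bS^\top\bone}{n}\bone\|^2$ is the squared norm of the projection of $\bS$ orthogonal to $\bone$, which equals $\|(\bS - L\bone) - \frac{\sum_i Z_i}{n}\bone\|^2 = \sum_i Z_i^2 - \frac{(\sum_i Z_i)^2}{n} = O_P(n^2)$ by Lemma~\ref{lem:S-minus-L-squared}, and the remaining discrepancy between $\lambda_1^2 n$ and $(\bS^\top\bone)^2/n$ contributes $O(\lambda^2 n \cdot \|\br\|^2 / n \cdot \ldots)$; tracking constants carefully, with $\|\br\|^2 \le Cn/\lambda^2$ the error terms are $O(n^2)$, hence $\|Y\br\|^2 = O(n^2)$ --- and to squeeze the extra $\lambda^{-2}$ one observes $\|Y\br\|^2 = \lambda_1^2\|\br\|^2 + \|(Y - \lambda_1 I)\br\|^2 - 2\lambda_1\br^\top(\lambda_1 I - Y)\br$... actually since $Y\bv = \lambda_1\bv$ we have $(Y - \lambda_1 I)\bone = (Y-\lambda_1 I)\br$, so $Y\br = Y\bone - \lambda_1\bv = \bS - \lambda_1\bone + \lambda_1\br$, giving $\|Y\br\| \le \|\bS - \lambda_1\bone\| + \lambda_1\|\br\|$; the second term is $\lambda_1\sqrt{Cn}/\lambda = O(\sqrt n \cdot \sqrt n) = O(n)$ hmm that's $O(n)$ not $O(n/\lambda)$. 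The correct accounting: $\|\bS - \lambda_1\bone\|^2 = \|\bS\|^2 - 2\lambda_1\bS^\top\bone + \lambda_1^2 n$; using $\bS^\top\bone = \lambda_1\|\bv\|^2 + \br^\top Y\br = \lambda_1(n - \|\br\|^2) + \br^\top Y\br$, this becomes $\|\bS\|^2 - 2\lambda_1[\lambda_1(n-\|\br\|^2) + \br^\top Y\br] + \lambda_1^2 n = \|\bS\|^2 - \lambda_1^2 n + 2\lambda_1^2\|\br\|^2 - 2\lambda_1\br^\top Y\br = \|Y\br\|^2 + 2\lambda_1^2\|\br\|^2 - 2\lambda_1\br^\top Y\br$ --- consistent but circular, so I would just settle for the direct bound $\|Y\br\|^2 \le \|Y\|_{\op}^2\|\br\|^2$. \emph{Remark:} if the statement truly needs $n^2/\lambda^2$, then the operator norm must be used more cleverly on the restriction of $Y$ to $\br$'s direction, or $\|\bS\|^2 = \lambda^2n^2(1+o_P(1))$ together with $\lambda_1^2\|\bv\|^2 = \lambda^2 n^2(1 + o_P(1)) - O(n^2)$ with the $O(n^2)$ term being exactly $\|Y\br\|^2$; the $\lambda^{-2}$ then comes from a second-order expansion of $\lambda_1$ around $\lambda\sqrt n + \lambda^{-1}\sqrt n$. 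I would present the argument so that whichever sharper form is needed downstream in Theorem~\ref{thm:fluc} is obtained by substituting the w.p.h.p.\ estimates $\|\bS\|^2 = \lambda^2 n^2 + O_P(\lambda n^{3/2})$, $\bS^\top\bone = \lambda n^{3/2} + O_P(\sqrt{n}\cdot n)$ hmm $= \lambda n^{3/2}(1 + O_P(n^{-1/2}/\lambda \cdot\sqrt n))$... into the identities above.

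\textbf{Main obstacle.} The genuinely delicate point is establishing the \emph{sharp} rates (the explicit $\lambda^{-2}$ factors) rather than crude $o_P(n)$ or $O(n^2)$ bounds, since these require exploiting the near-cancellation between $\|\bS\|^2$ and $\lambda_1^2\|\bv\|^2$, both of which are $\asymp \lambda^2 n^2$; this is a classical feature of the von~Mises/power-iteration analysis and is handled by writing everything in terms of $\bS$, $\bS^\top\bone$, and the orthogonal projection $\bS - \frac{\bS^\top\bone}{n}\bone$ (whose norm is $O_P(n)$, the source of all the lower-order terms), then invoking the concentration estimates of Lemmas~\ref{lem:Zi}--\ref{lem:S-minus-L-squared} to upgrade "in probability" to "w.p.h.p." For the w.p.h.p.\ upgrade I would use Gaussian concentration: $\bS - L\bone$ is a Lipschitz (indeed linear) function of the Gaussian vector $(X_{ij})$, and $\|\bS - L\bone\|^2$, $\bS^\top\bone$, $\|X_n\|_{\op}$ all concentrate around their means with Gaussian tails at scale given by the variances computed in Lemmas~\ref{lem:Zi}--\ref{lem:S_dot_1}, which decay polynomially relative to the means; a union bound over the $O(1)$ many such events gives the claim.
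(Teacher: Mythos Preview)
Your proposal misses the one idea that makes the lemma short. Since $\br$ is orthogonal to the top eigenvector $\bv$ of $Y$, the action of $Y$ on $\br$ is governed not by $\|Y\|_{\op} = O(\lambda\sqrt{n})$ but by the \emph{second} largest eigenvalue (in absolute value). By eigenvalue interlacing for the rank-one perturbation $Y = X + \mu\bone\bone^\top$, one has $\lambda_2(Y) \le \lambda_1(X) \le (2+\eta)\sqrt{n}$ w.p.h.p.\ (Theorem~\ref{thm:largest_ev}), and similarly $\lambda_n(Y) \ge -(2+\eta)\sqrt{n}$. Hence $\|Y\br\| \le (2+\eta)\sqrt{n}\,\|\br\|$, which is a factor $\lambda$ better than your bound $\|Y\br\| \le \|Y\|_{\op}\|\br\|$. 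This single observation gives part (ii) immediately from part (i), with the correct $n^2/\lambda^2$ rate that you could not extract.

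The same idea also drives part (i) cleanly, and your route via $\|\bS\|^2$ versus $\lambda_1^2\|\bv\|^2$ does not close. The paper instead uses the Pythagorean identity $\|\bS - L\bone\|^2 = (\lambda_1 - L)^2\|\bv\|^2 + \|Y\br - L\br\|^2$ (which holds because $\bv \perp (Y\br - L\br)$), drops the first term, and lower-bounds the second via $\|Y\br - L\br\| \ge (L - \lambda_2(Y))\|\br\| \ge (\lambda - (2+\eta))\sqrt{n}\,\|\br\|$. With $\|\bS - L\bone\|^2 = O_P(n^2)$ from Lemma~\ref{lem:S-minus-L-squared}, this yields $\|\br\|^2 \le Cn/\lambda^2$ directly. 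Your attempt to get $\|\br\|^2$ from $\|\bS\|^2 \ge \lambda_1^2\|\bv\|^2$ compares two quantities that are both $\lambda^2 n^2(1+o_P(1))$, so you are trying to read off a subleading term from a leading-order inequality; without an independent handle on the gap $\lambda_1 - \lambda\sqrt{n}$ (which is precisely what the downstream Theorem~\ref{thm:fluc} is trying to establish), this is circular, as you yourself half-noticed.
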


\begin{proof}
Since
\[
    \bS - L\bone = (\lambda_1 -L) \bv + (Y\br - L\br),
\]
we have by Pythagoras' theorem,
\begin{equation}\label{eq:pythoagoras}
    \|\bS - L\bone\|^2 = (\lambda_1 - L)^2 \|\bv\|^2 + \|Y\br - L\br\|^2.
\end{equation}
Now by the Courant-Fischer minimax theorem and a quantitative version of Theorem~\ref{thm:largest_ev}, we have
\[
    \lambda_2(Y) \le \lambda_1(Y - \mu \bone\bone^\top) = \lambda_1(X) \le (2 + \eta)\sqrt{n}
\]
w.p.h.p. Therefore
\begin{equation}\label{eq:Yr}
    \|Y\br\| \le \lambda_2(Y) \|\br\| \le (2 + \eta) \sqrt{n} \|\br\| 
\end{equation}
w.p.h.p. It follows that
\[
    \|Y\br - L\br\| \ge |\|Y\br\| - L\|\br\|| \ge (L - \lambda_2(Y))\|\br\| \ge (\lambda - (2 + \eta)) \sqrt{n} \|\br\|
\]
w.p.h.p. It follows now from the decomposition \eqref{eq:pythoagoras} and  Lemma~\ref{lem:S-minus-L-squared} that 
\[
    \|\br\|^2 \le \frac{\|\bS - L\bone\|^2}{(\lambda - (2 + \eta))^2 n} \le \frac{C n}{\lambda^2}
\]
w.p.h.p. This proves part (i). Part (ii) then follows from \eqref{eq:Yr} and part (i).
\end{proof}

\begin{lemma}\label{lem:upper_bound_lambda1}
    We have $\lambda_1 = \lambda \sqrt{n} + O_P(\sqrt{n})$.
\end{lemma}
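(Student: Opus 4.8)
The plan is to extract $\lambda_1$ from the von Mises identity \eqref{eq:von_Mises} together with the moment and tail estimates already assembled in Lemmas~\ref{lem:Zi}--\ref{lem:upper_bound_r_and_Yr}. First I would note that $\bS^\top \bone = \sum_i S_i = L n + \sum_i Z_i$, so by Lemma~\ref{lem:S_dot_1} we have $\bbE[\bS^\top\bone] = \lambda n^{3/2}$ and $\Var(\bS^\top\bone) = O(n^{\max\{2, 3-\varepsilon\}})$; hence, by Chebyshev, $\bS^\top\bone = \lambda n^{3/2}(1 + o_P(1))$, and in fact $\bS^\top\bone = \lambda n^{3/2} + O_P(n)$ when $\varepsilon \ge 1$ (and $O_P(n^{(3-\varepsilon)/2})$ otherwise). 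Similarly, $\bS^\top\bS = \|\bS\|^2 = \|L\bone + (\bS - L\bone)\|^2 = L^2 n + 2L\sum_i Z_i + \|\bS - L\bone\|^2$; using $L = \lambda\sqrt n$, Lemma~\ref{lem:S-minus-L-squared}, and the bound on $\sum_i Z_i$, the dominant term is $L^2 n = \lambda^2 n^2$, and $\bS^\top\bS = \lambda^2 n^2 (1 + o_P(1))$.

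Next I would handle the error term in \eqref{eq:von_Mises}. By Lemma~\ref{lem:upper_bound_r_and_Yr}, $\|Y\br\|^2 = O_P(n^2/\lambda^2)$, and $|\br^\top Y\br| \le \|Y\br\|\,\|\br\| = O_P(n/\lambda) \cdot O_P(\sqrt n/\lambda) = O_P(n^{3/2}/\lambda^2)$. Combined with $\lambda_1 = O_P(\lambda\sqrt n)$ (which I would get provisionally from the first-order analysis, or simply note that $\lambda_1 \le \|Y\|_{\op} \le L + \lambda_1(X) = \lambda\sqrt n (1+o_P(1))$ since $\|X\|_{\op} \le (2+\eta)\sqrt n$ w.p.h.p. by the quantitative form of Theorem~\ref{thm:largest_ev} used in the proof of Lemma~\ref{lem:upper_bound_r_and_Yr}), the numerator of the error term is $\lambda_1 \br^\top Y\br - \|Y\br\|^2 = O_P(n^2/\lambda)$. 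Dividing by $\bS^\top\bone \asymp \lambda n^{3/2}$, the whole error term is $O_P(\sqrt n / \lambda^2) = o_P(\sqrt n)$. Therefore \eqref{eq:von_Mises} gives
\[
    \lambda_1 = \frac{\bS^\top\bS}{\bS^\top\bone} + o_P(\sqrt n) = \frac{\lambda^2 n^2 (1 + o_P(1))}{\lambda n^{3/2}(1+o_P(1))} + o_P(\sqrt n) = \lambda\sqrt n (1 + o_P(1)),
\]
which already yields $\lambda_1 = \lambda\sqrt n + o_P(\lambda\sqrt n)$. To sharpen this to the additive error $O_P(\sqrt n)$ claimed, I would write $\bS^\top\bS / \bS^\top\bone$ more carefully: with $T := \sum_i Z_i = O_P(n^{\max\{1,(3-\varepsilon)/2\}}) = o_P(n^{3/2})$ and $U := \|\bS - L\bone\|^2 = n^2 + O_P(n^{\max\{3/2, 2-\varepsilon\}})$ from Lemma~\ref{lem:S-minus-L-squared}, one has
\[
    \frac{\bS^\top\bS}{\bS^\top\bone} = \frac{\lambda^2 n^2 + 2\lambda\sqrt n\, T + U}{\lambda n^{3/2} + T} = \lambda\sqrt n \cdot \frac{1 + \frac{2T}{\lambda n^{3/2}} + \frac{U}{\lambda^2 n^2}}{1 + \frac{T}{\lambda n^{3/2}}}.
\]
Expanding the ratio to first order, the leading correction is $\lambda\sqrt n \cdot \frac{T}{\lambda n^{3/2}} = T/n = O_P(1/\sqrt n)$ from the $T$ terms and $\lambda\sqrt n \cdot \frac{U}{\lambda^2 n^2} = \frac{U}{\lambda n^{3/2}} = \frac{1}{\lambda\sqrt n}(1 + o_P(1))$; all higher-order terms are $o_P(1)$. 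Hence $\lambda_1 = \lambda\sqrt n + O_P(1) + o_P(\sqrt n) = \lambda\sqrt n + O_P(\sqrt n)$, as claimed. (This crude version suffices for the statement of Lemma~\ref{lem:upper_bound_lambda1}; the finer expansion feeding into Theorem~\ref{thm:fluc} is deferred.)

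The main obstacle I anticipate is purely bookkeeping rather than conceptual: one must be careful that the error bounds on $\bS^\top\bone$, $\bS^\top\bS$, and the correction term in \eqref{eq:von_Mises} all remain $o_P(\lambda\sqrt n)$ uniformly over the allowed range of $\lambda$ (in particular one needs $\lambda$ bounded above by $D\sqrt n$ so that $L = \lambda\sqrt n$ does not dominate pathologically, and $\lambda$ large enough — here $\lambda > 4$ as in Lemma~\ref{lem:upper_bound_r_and_Yr} — so that the spectral gap argument separating $\lambda_1$ from $\lambda_2(Y)$ is valid). The cleanest route is to first establish $\lambda_1 \le (\lambda + 2 + \eta)\sqrt n$ w.p.h.p.\ directly from $\|Y\|_{\op} \le \mu\|\bone\bone^\top\|_{\op} + \|X\|_{\op}$ and the quantitative edge bound for $X$, use this a priori bound inside the error term of \eqref{eq:von_Mises}, and only then read off the sharper asymptotics from the main term; this avoids any circularity between the estimate for $\lambda_1$ and the estimates for $\br$ and $Y\br$ that themselves used a bound on $\lambda_2(Y)$.
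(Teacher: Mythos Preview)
Your argument is correct in outline, but it is far more elaborate than what the paper does. The paper's entire proof of this lemma is a one-liner via Weyl's inequality:
\[
    \lambda_1(Y) \le \|\mu\bone\bone^\top\|_{\op} + \lambda_1(X) = \lambda\sqrt{n} + O_P(\sqrt{n}),
\]
invoking Theorem~\ref{thm:largest_ev} for the second term (the matching lower bound is immediate from the other Weyl inequality and $\lambda_n(X) = -O_P(\sqrt{n})$). You actually identify exactly this argument twice in your write-up---once as the ``provisional'' a~priori bound on $\lambda_1$, and again at the end as ``the cleanest route''---but you relegate it to a side remark and instead run the full von~Mises machinery of Lemmas~\ref{lem:S_dot_1}--\ref{lem:upper_bound_r_and_Yr} as your main proof. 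That machinery is what the paper reserves for the finer expansion in Theorem~\ref{thm:fluc}; for the crude $O_P(\sqrt{n})$ statement here it is unnecessary.

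One small slip: you claim $T/n = O_P(1/\sqrt{n})$, but by your own earlier bound $T = O_P(n^{\max\{1,(3-\varepsilon)/2\}})$, so $T/n = O_P(n^{\max\{0,(1-\varepsilon)/2\}})$, which for $0<\varepsilon<1$ is larger than $O_P(1)$. This does not damage the conclusion, since $T/n = o_P(\sqrt{n})$ in all cases, but the intermediate claim as written is wrong.
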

\begin{proof}
By Weyl's inequality and Theorem~\ref{thm:largest_ev},
\[
    \lambda_1 \le \|\mu \bone\bone^\top\|_{\op} +  \lambda_1(X) = \lambda\sqrt{n} + O_P(\sqrt{n}).  
\]
This completes the proof.
\end{proof}

\begin{lemma}\label{lem:rel_to_avg}
We have
\[
    \frac{\bS^\top \bS}{\bS^\top \bone} - \frac{\bS^\top \bone}{n} = \frac{\sqrt{n}}{\lambda} \bigg(1 + O_P\bigg(\max\bigg\{\frac{n^{-\varepsilon/2}}{\lambda}, n^{-\min\{1/2, \varepsilon\}}\bigg\}\bigg)\bigg) = \frac{\sqrt{n}}{\lambda} \bigg(1 +O_P\bigg(n^{-\frac{\min\{1, \varepsilon\}}{2}}\bigg)\bigg).
\]
\end{lemma}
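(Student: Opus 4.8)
The plan is to base everything on the exact identity
\[
    \frac{\bS^\top \bS}{\bS^\top \bone} - \frac{\bS^\top \bone}{n} = \frac{n\,\bS^\top\bS - (\bS^\top\bone)^2}{n\,\bS^\top\bone},
\]
and then estimate the numerator and the denominator separately, converting the variance bounds of Lemmas~\ref{lem:Zi}, \ref{lem:S_dot_1} and \ref{lem:S-minus-L-squared} into $O_P$-statements via Chebyshev's inequality. A convenient first observation is that, writing $S_i = L + Z_i$, the constant $L$ cancels from the numerator:
\[
    n\,\bS^\top\bS - (\bS^\top\bone)^2 = n\sum_i S_i^2 - \Big(\sum_i S_i\Big)^2 = n\sum_i Z_i^2 - \Big(\sum_i Z_i\Big)^2
\]
(equivalently $\sum_{i<j}(Z_i - Z_j)^2$, which also makes the positivity of the answer transparent).

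For the numerator I would use Lemma~\ref{lem:Zi}(i) to get $\bbE\big[n\sum_i Z_i^2\big] = n\sum_i \Var(Z_i) = n^3 + O(n^{3-\varepsilon})$ and Lemma~\ref{lem:S-minus-L-squared}(ii) to get $\Var\big(n\sum_i Z_i^2\big) = n^2\,\Var(\|\bS - L\bone\|^2) = O(n^{\max\{5,\,6-2\varepsilon\}})$, so that $n\sum_i Z_i^2 = n^3 + O_P(n^{\max\{5/2,\,3-\varepsilon\}})$; likewise $\sum_i Z_i$ is a centered Gaussian with variance $2n^2 + O(n^{3-\varepsilon})$ by Lemma~\ref{lem:S_dot_1}(ii), so $\big(\sum_i Z_i\big)^2 = O_P(n^{\max\{2,\,3-\varepsilon\}})$. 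Since each of the exponents $3-\varepsilon$, $\max\{5/2,3-\varepsilon\}$, $\max\{2,3-\varepsilon\}$ is at most $3-\min\{1/2,\varepsilon\}$, this gives $n\,\bS^\top\bS - (\bS^\top\bone)^2 = n^3\big(1 + O_P(n^{-\min\{1/2,\varepsilon\}})\big)$. For the denominator, Lemma~\ref{lem:S_dot_1} gives $\bS^\top\bone = \lambda n^{3/2} + \sum_i Z_i = \lambda n^{3/2}\big(1 + O_P(n^{-\min\{1,\varepsilon\}/2}/\lambda)\big)$ --- in particular $\bS^\top\bone > 0$ with probability tending to one, which legitimises the division --- so $n\,\bS^\top\bone = \lambda n^{5/2}\big(1 + O_P(n^{-\min\{1,\varepsilon\}/2}/\lambda)\big)$. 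Dividing the two estimates and using $\tfrac{1+O_P(a)}{1+O_P(b)} = 1 + O_P(\max\{a,b\})$ when $b = o(1)$ (valid since $\lambda$ is bounded below, say $\lambda > 4$) produces $\tfrac{\sqrt n}{\lambda}\big(1 + O_P(\max\{n^{-\min\{1,\varepsilon\}/2}/\lambda,\, n^{-\min\{1/2,\varepsilon\}}\})\big)$; this maximum equals $\max\{n^{-\varepsilon/2}/\lambda,\, n^{-\min\{1/2,\varepsilon\}}\}$ in both regimes (for $\varepsilon \ge 1$ both are dominated by $n^{-1/2}$ since $\lambda \gtrsim 1$), which is the first stated form, and bounding both terms in the maximum by $n^{-\min\{1,\varepsilon\}/2}$ (using $\lambda \gtrsim 1$ and $\min\{1/2,\varepsilon\} \ge \min\{1,\varepsilon\}/2$) gives the second.

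The only genuinely fiddly part is the exponent bookkeeping in the middle paragraph: one has to check that the subleading deterministic pieces (the $O(n^{3-\varepsilon})$ terms from the means and the $2n^2$ from $\bbE[(\sum_i Z_i)^2]$) together with the Chebyshev fluctuation terms all collapse into the single exponent $\min\{1/2,\varepsilon\}$ for the numerator and $\min\{1,\varepsilon\}/2$ for the denominator, and that the resulting maximum coincides with the error term written in the statement for both $\varepsilon < 1$ and $\varepsilon \ge 1$. Beyond this there is no conceptual obstacle: every moment estimate that is needed is already supplied by Lemmas~\ref{lem:Zi}--\ref{lem:S-minus-L-squared}.
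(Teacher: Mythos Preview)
Your proof is correct and follows essentially the same route as the paper: the paper uses the identical algebraic identity (written in the normalised form $\frac{\frac{1}{n}\|\bS - L\bone\|^2 - (\frac{\bS^\top\bone}{n} - L)^2}{\bS^\top\bone/n}$, which is your expression divided by $n^2$ in both numerator and denominator) and then plugs in the same estimates from Lemmas~\ref{lem:S_dot_1} and~\ref{lem:S-minus-L-squared}. The exponent bookkeeping you carry out is a touch more careful than the paper's in the denominator (you correctly write $n^{-\min\{1,\varepsilon\}/2}/\lambda$ where the paper writes $n^{-\varepsilon/2}/\lambda$, though the difference is immaterial once absorbed into the $n^{-\min\{1/2,\varepsilon\}}$ term for $\varepsilon \ge 1$).
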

\begin{proof}
We have
\begin{align*}
    \frac{\bS^\top \bS}{\bS^\top \bone} - \frac{\bS^\top \bone}{n} &= \frac{\frac{1}{n}\sum_i (S_i - L)^2 -(\frac{1}{n}\sum_i S_i - L)^2}{\sum_i S_i / n}  = \frac{\frac{1}{n}\|\bS - L\bone\|^2 - (\frac{\bS^\top \bone}{n} - L)^2}{\frac{\bS^\top \bone}{n}}.
\end{align*}
By Lemma~\ref{lem:S-minus-L-squared}, we have
\[
    \frac{1}{n}\|\bS - L\bone\|^2 = n + O_P(n^{\max\{1/2, 1 - \varepsilon\}}).
\]
Also, by Lemma~\ref{lem:S_dot_1},
\[
    \frac{\bS^\top \bone}{n} = \lambda \sqrt{n} + O_P(n^{1/2 - \varepsilon/2})
\]
and
\[
    \bbE\bigg(\frac{\bS^\top \bone}{n} - L\bigg)^2 = \frac{1}{n^2} \Var(\bS^\top \bone) = O(\max\{1, n^{1 - \varepsilon}\}). 
\]
Therefore
\begin{align*}
    \frac{\bS^\top \bS}{\bS^\top \bone} - \frac{\bS^\top \bone}{n} &= \frac{n + O_P(n^{\max\{1/2, 1 - \varepsilon\}}) + O_P(\max\{1, n^{1 - \varepsilon}\})}{\lambda \sqrt{n} + O_P(n^{1/2 - \varepsilon/2})} \\
    &= \frac{n (1 + O_P(n^{\max\{-1/2, -\varepsilon\}}))}{\lambda \sqrt{n} (1 + O_P(n^{-\varepsilon/2}/\lambda)} \\
    &= \frac{\sqrt{n}}{\lambda} (1 + O_P(\max\{n^{-\varepsilon/2}/\lambda, n^{-\min\{1/2, \varepsilon\}}\})).
\end{align*}
This completes the proof.
\end{proof}

Notice that using Lemma~\ref{lem:upper_bound_r_and_Yr}, we have the following a priori bound on $\br^\top Y \br$:
\begin{equation}\label{eq:quad_form_apriori}
    |\br^\top Y \br| \le \| \br \| \|Y\br\| \le c_1 \frac{n \sqrt{n}}{\lambda^2}
\end{equation}
w.p.h.p.

We are finally ready to prove Theorem~\ref{thm:fluc}.

\begin{proof}[Proof of Theorem~\ref{thm:fluc}]
Using Lemmas~\ref{lem:von_Mises}, ~\ref{lem:S_dot_1}, ~\ref{lem:upper_bound_r_and_Yr}, ~\ref{lem:upper_bound_lambda1}, and the estimate \eqref{eq:quad_form_apriori}, we see that
\[
     \bigg|\lambda_1 - \frac{\bS^\top \bS}{\bS^\top \bone}\bigg|
    = \frac{|\|Y\br\|^2 - \lambda_1 \br Y \br|}{|\bS^\top \bone|} = \frac{O_P(\frac{n^2}{\lambda^2}) + O_p(\frac{n^2}{\lambda})}{\lambda n \sqrt n (1 + o_p(1))} = O_P\bigg(\frac{\sqrt{n}}{\lambda^2}\bigg).
\]
This and Lemma~\ref{lem:rel_to_avg} imply that
\begin{align*}
    \lambda_1 &= \frac{\bS^\top \bS}{\bS^\top \bone} + O_P\bigg(\frac{\sqrt{n}}{\lambda^2}\bigg) \\
    &= \frac{\bS^\top \bone}{n} +\frac{\sqrt{n}}{\lambda} \bigg(1 + O_P\bigg(n^{-\frac{\min\{1, \varepsilon\}}{2}}\bigg)\bigg) + O_P\bigg(\frac{\sqrt{n}}{\lambda^2}\bigg) \\
    &= \lambda \sqrt{n} + \frac{1}{n}\sum_{i, j} X_{ij} + \frac{\sqrt{n}}{\lambda} + \frac{\sqrt{n}}{\lambda} \cdot O_P\bigg(n^{-\frac{\min\{1, \varepsilon\}}{2}}\bigg) + O_P\bigg(\frac{\sqrt{n}}{\lambda^2}\bigg).
\end{align*}
Hence
\[
    \lambda_1 - \lambda \sqrt{n} - \frac{\sqrt{n}}{\lambda} = \frac{1}{n}\sum_{i, j} X_{ij} + \frac{\sqrt{n}}{\lambda} \cdot O_P\bigg(n^{-\frac{\min\{1, \varepsilon\}}{2}}\bigg) + O_P\bigg(\frac{\sqrt{n}}{\lambda^2}\bigg).
\]
In other words,
\[
    \sqrt{n} \bigg[\lambda_1(n^{-1/2} Y_n) - \bigg(\lambda + \frac{1}{\lambda}\bigg)\bigg] = \frac{1}{n}\sum_{i, j} X_{ij} + \frac{\sqrt{n}}{\lambda} \cdot O_P\bigg(n^{-\frac{\min\{1, \varepsilon\}}{2}}\bigg) + O_P\bigg(\frac{\sqrt{n}}{\lambda^2}\bigg).
\]
This completes the proof.
\end{proof}

\begin{proof}[Proof of Corollary~\ref{cor:fluc}]
Notice that under our assumptions,
\begin{align} \nonumber
    \Var\bigg[\frac{1}{n}\sum_{i, j} X_{ij}\bigg]
    &= \frac{1}{n^2} \bigg[\sum_i \Var(X_{ii}) + 4 \sum_{i < j} \Var(X_{i, j}) \\ \nonumber
    &\qquad\qquad + 2 \sum_i\sum_{k < l} \Cov(X_{ii}, X_{kl}) + 4 \sum_{i < j}\sum_{k < l} \Cov(X_{ij}, X_{kl})\bigg] \\
    &=\max\{2 + O(1/n), O(n^{1 - \varepsilon})\} . \label{eq:sum_Xij_var}
\end{align}
Thus only for $\varepsilon \ge 1$, $\frac{1}{n}\sum_{i, j} X_{ij}$ is tight. Since the $X_{ij}$'s are jointly Gaussian, it follows that $\frac{1}{n}\sum_{i, j} X_{ij}$ has for $\varepsilon \ge 1$ an asymptotic Gaussian distribution with variance $2$. Hence, if $\lambda \gg n^{1/4}$, we have
\[
    \sqrt{n} \bigg[\lambda_1(n^{-1/2} Y_n) - \bigg(\lambda + \frac{1}{\lambda}\bigg)\bigg] \xrightarrow{d} \sqrt{2}Z,
\]
where $Z$ is a standard Gaussian. This proves Part (a).

For Part(b), we scale by $n^{\frac{1 - \varepsilon}{2}}$ to get
\begin{align*}
    n^{\varepsilon/2} \bigg[\lambda_1(n^{-1/2} Y_n) - \bigg(\lambda + \frac{1}{\lambda}\bigg)\bigg] &= \frac{1}{n^{\frac{1 - \varepsilon}{2}}} \cdot \frac{1}{n} \sum_{i, j} X_{ij} + \frac{n^{\varepsilon / 2}}{\lambda} \cdot O_P(n^{-\frac{\min\{\varepsilon, 1\}}{2}})) + O_P\bigg(\frac{n^{\varepsilon/2}}{\lambda^2}\bigg) \\
    &= \frac{1}{n^{\frac{1 - \varepsilon}{2}}} \cdot \frac{1}{n}\sum_{i, j} X_{ij} + O_P(\lambda^{-1}) + O_P\bigg(\frac{n^{\varepsilon/2}}{\lambda^2}\bigg).
\end{align*}
Since the $X_{ij}$'s are jointly Gaussian, it follows from our assumption that $\frac{1}{n^{\frac{1 - \varepsilon}{2}}} \cdot \frac{1}{n}\sum_{i, j} X_{ij}$ has an asymptotic Gaussian distribution with variance $\sigma^2$. Thus if $\lambda \gg n^{\varepsilon/4}$, we obtain that
\[
    n^{\varepsilon/2} \bigg[\lambda_1(n^{-1/2} Y_n) - \bigg(\lambda + \frac{1}{\lambda}\bigg)\bigg] \xrightarrow{d} \sigma Z.
\]
This completes the proof.
\end{proof}

\section*{Acknowledgements}
SSM was partially supported by the INSPIRE research grant DST/INSPIRE/04/2018/002193 from the Dept.~of Science and Technology, Govt.~of India, and a Start-Up Grant from Indian Statistical Institute.

\bibliographystyle{alpha}
\bibliography{refs}

\appendix
\section{Miscellaneous proofs}\label{sec:misc-proofs}

\begin{proof}[Proof of Lemma~\ref{lem:Zi}]
Parts (i) and (ii) are straightforward to prove. Indeed,
\[
    \Var(Z_i) = \sum_{j} \Var(X_{ij}) + \sum_{j \ne j} \Cov(X_{ij}, X_{ij'}) = n + O\bigg(\frac{n^2}{n^{1 + \varepsilon}}\bigg) = n + O(n^{1 - \varepsilon}),
\]
and
\[
    \Cov(Z_i, Z_{i'}) = \Var(X_{ii'}) + \sum_{j \ne i' \text{ or } j' \ne i} \Cov(X_{ij}, X_{ij'}) = n + O\bigg(\frac{n^2}{n^{1 + \varepsilon}}\bigg) = 1 + O(n^{1 - \varepsilon}).
\]
To prove (iii), we first decompose the variance as follows:
\begin{align}\label{eq:var_Z2_decomp} \nonumber
    \Var(Z^2_i) &= \Var\bigg( \sum_j X^2_{ij} + \sum_{j \neq j'} X_{ij} X_{ij'}\bigg) \\ \nonumber
    &= \Var\bigg(\sum_j X^2_{ij}\bigg) + \Var\bigg(\sum_{j \neq j'} X_{ij} X_{ij'}\bigg) + 2 \sum_{j, k \neq k'} \Cov(X^2_{ij}, X_{ik}X_{ik'}) \\ \nonumber
    &= \sum_j \Var(X^2_{ij}) + \sum_{j \neq j'} \Var(X_{ij} X_{ij'}) + \sum_{j \ne j'} \Cov(X_{ij}^2, X_{ij'}^2) \\ 
    &\qquad\qquad + \sum_{\substack{j \neq j',\, k \ne k'\\ \{j, j'\} \ne \{k, k'\}}} \Cov(X_{ij} X_{ij'}, X_{ik} X_{ik'}) + 2 \sum_{j, k \neq k'} \Cov( X^2_{ij}, X_{ik}X_{ik'}).
\end{align}
Using Wick's formula, we have
\begin{align}\label{eq:var_Z2_est_1} \nonumber
    \Var(X_{ij} X_{ij'}) &= \bbE[X^2_{ij}X^2_{ij'}] - (\bbE[X_{ij}X_{ij'}])^2 \\ \nonumber
    &= \bbE[X^2_{ij}]\bbE[X^2_{ij'}] + 2\bbE[X_{ij}X_{ij'}] \bbE[X_{ij}X_{ij'}] - (\bbE[X_{ij}X_{ij'}])^2 \\ \nonumber
    &= \bbE[X^2_{ij}]\bbE[X^2_{ij'}] + (\bbE[X_{ij}X_{ij'}])^2 \\
    &= 1 + O(n^{-(2 + 2 \varepsilon)}).
\end{align}
Similarly,
\begin{align}\label{eq:var_Z2_est_2} \nonumber
    \Cov(X_{ij}^2, X_{ij'}^2) &= \bbE[X^2_{ij}X^2_{ij'}] - \bbE[X_{ij}^2] \bbE[X_{ij'}^2] \\ \nonumber
    &= \bbE[X^2_{ij}]\bbE[X^2_{ij'}] + 2\bbE[X_{ij}X_{ij'}] \bbE[X_{ij}X_{ij'}] - \bbE[X_{ij}^2] \bbE[X_{ij'}^2] \\ \nonumber
    &= 2\bbE[X_{ij}X_{ij'}] \bbE[X_{ij}X_{ij'}] \\
    &= O(n^{-(2 + 2 \varepsilon)}).
\end{align}

\begin{align}\label{eq:var_Z2_est_3} \nonumber
    \Cov(X_{ij}X_{ij'}, X_{ik}X_{jk'}) &= \bbE[X_{ij} X_{ij'} X_{ik}X_{jk'}] - \bbE[X_{ij}X_{ij'}] \bbE[X_{ik}X_{jk'}] \\ \nonumber
    &= \bbE[X_{ij}X_{ij'}] \bbE[X_{ik}X_{ik'}] + \bbE[X_{ij}X_{ik}] \bbE[X_{ij'}X_{ik'}] \\ \nonumber 
    &\qquad\qquad + \bbE[X_{ij}X_{ik'}]\bbE[X_{ij'}X_{ik}] - \bbE[X_{ij}X_{ij'}] \bbE[X_{ik}X_{jk'}] \\ \nonumber
    &=  \bbE[X_{ij}X_{ik}] \bbE[X_{ij'}X_{ik'}] + \bbE[X_{ij}X_{ik'}]\bbE[X_{ij'}X_{ik}] \\
    &= \begin{cases}
        O(n^{-(1 + \varepsilon)}) & \text{ if } |\{j, j', k, k'\}| = 3, \\
        O(n^{-(2 + 2\varepsilon)}) & \text{ if } |\{j, j', k, k'\}| = 4. \\
    \end{cases}
\end{align}
\begin{align}\label{eq:var_Z2_est_4} \nonumber
    \Cov(X^2_{ij}, X_{ik}X_{ik'}) &= \bbE[X^2_{ij} X_{ik}X_{ik'}] - \bbE[X^2_{ij}] \bbE[X_{ik}X_{ik'}] \\ \nonumber
    &= \bbE[X^2_{ij}] \bbE[X_{ik}X_{ik'}] + 2\bbE[X_{ij}X_{ik}]\bbE[X_{ij}X_{ik'}] -\bbE[X^2_{ij}] \bbE[X_{ik}X_{ik'}] \\ \nonumber
    &= 2 \bbE[X_{ij}X_{ik}]\bbE[X_{ij}X_{ik'}] \\
    &= \begin{cases}
        O(n^{-(1 + \varepsilon)}) & \text{if } j = k \text{ or } j = k', \\
        O(n^{-(2 + 2 \varepsilon)}) & \text{ otherwise}.
    \end{cases}
\end{align}
Let $\sigma_4 = \bbE[X^4_{ij}]$. Plugging the estimates \eqref{eq:var_Z2_est_1}, \eqref{eq:var_Z2_est_2}, \eqref{eq:var_Z2_est_3} and \eqref{eq:var_Z2_est_4} into \eqref{eq:var_Z2_decomp}, we get
\begin{align*}
    \Var(Z^2_i) &= (\sigma_4 - 1) n + O(n^2) \cdot (1 + O(n^{-(2 + 2 \varepsilon)})) + O(n^2) \cdot O(n^{-(2 + 2 \varepsilon)}) \\
    &\qquad\qquad + [O(n^3) \cdot O(n^{-(1 + \varepsilon}) + O(n^4) \cdot O(n^{-(2 + 2\varepsilon})]  \\
    &\qquad\qquad\qquad\qquad + [O(n^2) \cdot O(n^{-(1 + \varepsilon}) + O(n^3) \cdot O(n^{-(2 + 2 \varepsilon)})]\\
    &= O(n^2).
\end{align*}
This proves (iii).

Now we prove (iv).
\begin{align}\label{eq:cov_z_decom} \nonumber
  \Cov( Z^2_i , Z^2_{i'}) &= \Cov \bigg( \bigg(\sum_j X_{ij}\bigg)^2 , \bigg(\sum_j X_{i'j}\bigg)^2 \bigg)\\ \nonumber
  &=  \Var(X^2_{ii'}) + \sum_{(j, j') \ne (i, i')} \Cov( X^2_{ij}, X^2_{i'j'}) + 2\sum_{j, k \neq k'} \Cov( X^2_{ij}, X_{i'k}X_{i'k'}) \\
  &\qquad +  2\sum_{j, k \neq k'} \Cov( X^2_{i'j}, X_{ik}X_{ik'}) + 4\sum_{j \neq j' , k \neq k'} \Cov( X_{ij}X_{ij'}, X_{i'k}X_{i'k'}).
\end{align}
Now
\begin{align}\label{eq:cov_est_Z2_1} \nonumber
    \Cov( X^2_{ij}, X^2_{i'j'}) &= \bbE[X^2_{ij} X^2_{i'j'}] - \bbE[X^2_{ij}]\bbE[X^2_{i'j'}] \\ \nonumber
    &= \bbE[X^2_{ij}]\bbE[X^2_{i'j'}] + 2\bbE[X_{ij}X_{i'j'}] \bbE[X_{ij}X_{i'j'}] - \bbE[X^2_{ij}]\bbE[X^2_{i'j'}] \\ \nonumber
    &=  2\bbE[X_{ij}X_{i'j'}] \bbE[X_{ij}X_{i'j'}] \\
    &= O(n^{-(2 + 2\varepsilon)}).
\end{align}
Also,
\begin{align}\label{eq:cov_est_Z2_2} \nonumber
    \Cov( X^2_{ij}, X_{i'k}X_{i'k'}) &= \bbE[X^2_{ij}X_{i'k}X_{i'k'}] - \bbE[X^2_{ij}]\bbE[X_{i'k}X_{i'k'}] \\ \nonumber
    &= \bbE[X^2_{ij}]\bbE[X_{i'k}X_{i'k'}] + 2 \bbE[X_{ij}X_{i'k}] \bbE[X_{ij}X_{i'k'}] - \bbE[X^2_{ij}]\bbE[X_{i'k}X_{i'k'}] \\ \nonumber
    &=  2 \bbE[X_{ij}X_{i'k}] \bbE[X_{ij}X_{i'k'}] \\ 
    &= O(n^{-(2 + 2\varepsilon)}).
\end{align}
Similarly,
\begin{align}\label{eq:cov_est_Z2_3} 
    \Cov( X^2_{i'j}, X_{ik}X_{ik'}) = O(n^{-(2 + 2\varepsilon)}).
\end{align}
Finally,
\begin{align}\label{eq:cov_est_Z2_4} \nonumber
    \Cov( X_{ij}X_{ij'}, X_{i'k}X_{i'k'}) &= \bbE[X_{ij}X_{ij'}X_{i'k}X_{i'k'}] - \bbE[X_{ij}X_{ij'}] \bbE[X_{i'k}X_{i'k'}] \\ \nonumber
    &= \bbE[X_{ij}X_{ij'}] \bbE[X_{i'k}X_{i'k'}] + \bbE[X_{ij}X_{i'k}] \bbE[X_{ij'}X_{i'k'}] \\ \nonumber
    &\qquad\qquad + \bbE[X_{ij}X_{i'k'}] \bbE[X_{ij'}X_{i'k}] - \bbE[X_{ij}X_{ij'}] \bbE[X_{i'k}X_{i'k'}] \\ \nonumber
    &= \bbE[X_{ij}X_{i'k}] \bbE[X_{ij'}X_{i'k'}] + \bbE[X_{ij}X_{i'k'}] \bbE[X_{ij'}X_{i'k}] \\
    &= O(n^{-(2 + 2\varepsilon)}).
\end{align}
Plugging the estimates \eqref{eq:cov_est_Z2_1}, \eqref{eq:cov_est_Z2_2}, \eqref{eq:cov_est_Z2_3} and \eqref{eq:cov_est_Z2_4} into \eqref{eq:cov_z_decom}, we get
\begin{align*}
    \Cov( Z^2_i , Z^2_{i'}) &= 2 + O(n^2) \cdot O(n^{-(2 + 2\varepsilon)}) + O(n^3) \cdot O(n^{-(2 + 2\varepsilon)}) + O(n^4) \cdot O(n^{-(2 + 2\varepsilon)}) \\
    &= 2 + O(n^{2 - 2 \varepsilon}).
\end{align*}
This completes the proof.
\end{proof}

\end{document}